\documentclass[a4paper,12pt,reqno]{amsart}
\usepackage{amsmath}
\usepackage{amsthm}
\usepackage{amsfonts}
\usepackage{amssymb}
\usepackage{amscd}
\usepackage{bm}
\usepackage{color}
\usepackage{draftcopy}
\usepackage{exscale}
\usepackage{extarrows}
\usepackage{epstopdf}
\usepackage{epsfig}
\usepackage{enumerate}
\usepackage{graphics}
\usepackage{graphicx}
\usepackage{latexsym}
\usepackage{mathrsfs}
\usepackage{pifont}
\usepackage{paralist}
\usepackage{esint}
\usepackage{boxedminipage}
\renewcommand\eqref[1]{(\ref{#1})} 
\usepackage[toc,page]{appendix}

\usepackage{hyperref}
\renewcommand\eqref[1]{(\ref{#1})} 

\usepackage[a4paper, hmargin={2.7cm,2.7cm},vmargin={3.3cm,3.3cm}]{geometry}

\newtheorem{theorem}{Theorem}[section]
\newtheorem{definition}{Definition}[section]
\newtheorem{proposition}[theorem]{Proposition}
\newtheorem{lemma}[theorem]{Lemma}

\newtheorem{remark}[theorem]{Remark}

\numberwithin{equation}{section}

\newcommand{\esssup}{\mathop{\mathrm{ess\,sup}}}

\newcommand{\Ree}{\operatorname{Re}}

\newcommand{\x}{\mathbf{x}}
\newcommand{\y}{\mathbf{y}}
\newcommand{\xx}{\mathrm{x}}
\newcommand{\yy}{\mathrm{y}}

\begin{document}

\title[Lipschitz spaces adapted to Schr\"{o}dinger operators]{Lipschitz spaces  adapted to Schr\"{o}dinger operators on the Heisenberg group}

\author[Qing Hong]{Qing Hong${ }^1$}
\thanks{$^1$School of Mathematics and Statistics,
 Jiangxi Normal University,
Nanchang, Jiangxi 330022, China}

\author[Xiuzhen Hou]{Xiuzhen Hou${ }^1$}

\author[Guorong Hu]{Guorong Hu${}^{1 \ast}$}
\thanks{$^\ast$Corresponding author, Email: hugr@mail.ustc.edu.cn}

\subjclass[2020]{Primary 42B35; Secondary 43A80, 35J10}



\keywords{Heisenberg group, Lipschitz space, Schr\"{o}dinger operator, heat semigroup}


\thanks{Q. Hong is supported by the National Natural Science Foundation of China (Grant No. 12361017) and 
the Natural Science Foundation of Jiangxi Province (Grant No. 20242BAB25002). G. Hu is supported by the  National Natural Science Foundation of China (Grant No. 12461018)}

\begin{abstract}
Let $L =-\Delta_{\mathbb{H}^n} +V$ be the Sch\"{o}dinger operator on the
Heisenberg group $\mathbb{H}^n$, where $\Delta_{\mathbb{H}^n}$ is the sub-Laplacian, and $V$ is a nonnegative potential belonging to the reverse H\"{o}lder class $RH_q(\mathbb{H}^n)$ for some $q > Q/2$, where $Q:=2n+2$ is the homogeneous dimension of $\mathbb{H}^n$. 
In this paper, motivated by the work of De Le\'{o}n-Contreras and Torrea \cite{DT}, we introduce the Lipschitz spaces $\Lambda_L^\alpha (\mathbb{H}^n)$,   $0< \alpha <2$,  adapted to $L$ via 
a pointwise second-order difference condition involving the critical radius function $\rho$ related to $V$, and
also introduce another type of Lipschitz spaces 
$\Gamma^{\alpha/2}_L(\mathbb{H}^n)$, $0< \alpha <\infty$, adapted to $L$ in terms of the heat semigroup $e^{-tL}$. 
We show that for $0< \alpha <2-(Q/q)$, $\Lambda_{L}^\alpha (\mathbb{H}^n) =\Gamma_L^{\alpha/2} (\mathbb{H}^n)$ with equivalent norms. Applications of $\Gamma^{\alpha/2}_L(\mathbb{H}^n)$ to the regularity of the fractional powers of the operator $L$ are also given.
\end{abstract}
 
\maketitle

\tableofcontents

\section{Introduction and statement of main result}
\allowdisplaybreaks
\subsection{Motivation}
For any $\alpha >0$,  the classical Lipschiz (or H\"{o}lder-Zygmund) space 
$\Lambda^\alpha (\mathbb{R}^n)$ is defined as the set of all measurable functions $f$ on $\mathbb{R}^n$
such that
\begin{align*}
\|f\|_{\Lambda^\alpha(\mathbb{R}^n)}: = \|f\|_{L^\infty (\mathbb{R}^n)} + \sup_{h \in \mathbb{R}^n \backslash \{0\}}
 \frac{ \|D_h^{\lfloor \alpha \rfloor +1} f\|_{L^\infty(\mathbb{R}^n)}}{|h|^\alpha} <\infty,
\end{align*}
where $\lfloor \alpha \rfloor:= \max \{\ell \in \mathbb{Z}: \ell \leq \alpha\}$.
Here, for any $k \in \mathbb{N}$,  $D^{k}_h f$ denotes the $k$-th order difference of $f$ which is defined 
iteratively as follows:
\begin{align*}
 D_h^1f(x) := f(x+h) -f(x), \quad
 D^{k}_h f(x) :=D_h^1 ( D_h^{k-1}f)(x), \quad k  \geq 2.
\end{align*}
This class of functions  play an important role in harmonic analysis and partial differential equations.

Let $L =-\Delta + V$ be the Schr\"{o}dinger operator
on $\mathbb{R}^n$, $n \geq 3$, where $V$ is a nonnegative potential belonging to the reverse H\"{o}lder class $RH_q (\mathbb{R}^n)$.  Bongioanni, Harboure and Salinas \cite{BHS-weighted} first introduced Lipschitz classes $\Lambda^\alpha_L(\mathbb{R}^n)$, $0< \alpha <1$, adapted to  the operator $L$, 
via the norms
\[
 \|f\|_{\Lambda^\alpha_L(\mathbb{R}^n)}:= \|\rho(\cdot)^{-\alpha}f(\cdot)\|_{L^\infty(\mathbb{R}^n)} +  \sup_{h \in \mathbb{R}^n \backslash \{0\}}
  \frac{\|f(\cdot + h) - f(\cdot)\|_{L^\infty}}{|z|^\alpha},
\]
and  showed that this space coincides with the space $BMO_L^\alpha(\mathbb{R}^n)$ which  is 
 defined as the space of locally integrable functions such that
 \[
 \int_B \left|f(x)-\frac{1}{|B|}\int_B f(y)dy\right|dx \leq C |B|^{1 +\frac{\alpha}{n}} \quad \text{for every ball } B =B(x_B,r_B),
 \]
 and
 \[
 \int_B |f(x)|dx \leq C |B|^{1 +\frac{\alpha}{n}} \quad \text{if } r_B \geq \rho(x_B).
 \]
They also study the behavior of the fractional integral $L^{-\beta/2}$ on   $\Lambda_L^\alpha(\mathbb{R}^n)$ for $0< \alpha <1$, $\beta \geq 0$
and $\alpha + \beta < \min\{1,2 -(n/q)\}$.
 Ma, Stinga, Torrea and Zhang in \cite{MSTZ} prove a characterization of the space $\Lambda_L^\alpha(\mathbb{R}^n)$, for $0<\alpha <1$, in
 terms of Carleson measures and fractional derivatives of any order of the Poisson semigroup $e^{-t\sqrt{L}}$, and studied the behavior of the
 fractional powers $L^{\beta/2}$ (positive and negative) and  multipliers of Laplace transform type on the space $\Lambda_L^\alpha(\mathbb{R}^n)$.
De Le\'{o}n-Contreras and Torrea \cite{DT}  extended the 
definition of $\Lambda_L^\alpha(\mathbb{R}^n)$ to $0< \alpha <2$ via second difference (see Definition~\ref{def:Lipschitz}), and 
generalized the regularity results obtained in \cite{MSTZ}.
In the case of the Hermite operator $H =-\Delta + |x|^2$, the associated H\"{o}lder spaces $C^{k,\alpha}_H(\mathbb{R}^n)$, for $k \in \mathbb{N}_0$ and $0< \alpha \leq 1$,  
 were investigated by Stinga and Torrea \cite{ST1}.  
 Recently, Hong, Xu and Hu \cite{HXH} proved the Littlewood-Paley and Carleson
 measure characterization of $\Lambda^\alpha_L(\mathbb{R}^n)$, for $0< \alpha < 2-n/q$.

The study of Lipschitz spaces on the Heisenberg group and more general stratified Lie groups has a long history. Lipschitz spaces $\Lambda^\alpha (G)$, $0< \alpha <\infty$, on an arbitrary stratified Lie group $G$ were first
introduced by Folland \cite{Folland}. Some characterizations of $\Lambda^\alpha(G)$ were given by Folland \cite{Folland-2}, Krantz \cite{Krantz} and Hu \cite{Hu-CMJ}.
Recently, the authors in \cite{HHLT-1} studied Marcinkiewicz multipliers and their boundedness on Lipschitz spaces on the Heisenberg group, while the authors in \cite{HHLT-2} studied boundedness of multi-parameter singular integrals on H\"{o}lder spaces associated with Zygmund dilations. However, all these works are restricted to the case where the underlying operator is the sub-Laplacian or its variants, without considering the presence of a nonnegative potential $V$.

More recently,  Dai \cite{Dai} introduced Campanato-type spaces $BMO_L^\alpha (\mathbb{H}^n)$  
adapted to the Schr\"{o}dinger operator $L =-\Delta_{\mathbb{H}^n} +V$
on the Heisenberg group for $0< \alpha \leq 1$, and established a 
$T1$ criterion for the boundedness of 
$\gamma$-Schr\"{o}dinger-Calder\'{o}n-Zygmund operators on these spaces. 
As applications, the author in \cite{Dai} proved the boundedness of maximal operators, Littlewood-Paley 
$g$-functions, Laplace transform type multipliers, and fractional integrals associated with 
$L$ on $BMO_L^\alpha (\mathbb{H}^n)$. Moreover,  it is shown in \cite{Dai} that
for $0<\alpha \leq 1$, $BMO_L^\alpha (\mathbb{H}^n)$ coincides with the H\"{o}lder space
$C^{0,\alpha}_L(\mathbb{H}^n)$ adapted to  the operator $L$. It is worth pointing out that for $0< \alpha <1$,
the H\"{o}lder space $C^{0,\alpha}_L (\mathbb{H}^n)$ coincides with the Lipschitz space $\Lambda^\alpha_L (\mathbb{H}^n)$;
however, for $\alpha =1$, such coincidence fails. 
We also observe that heat semigroup characterization and the
regularity of fractional powers of $L$ have not been investigated in  \cite{Dai}.

In the present paper, inspired by the work of De Le\'{o}n-Contreras and Torrea \cite{DT},
we will introduce two types of Lipschitz spaces adapted to the  
Schr\"{o}dinger operator $L =-\Delta_{\mathbb{H}^n} +V$ on the Heisenberg group. 
The first type of Lipschitz spaces, $\Lambda_L^\alpha (\mathbb{H}^n)$,  with $0< \alpha <2$, are defined via 
a pointwise second-order difference condition involving the critical radius function $\rho$ related to $V$. 
The second type of Lipschitz spaces, 
$\Gamma^\alpha_L(\mathbb{H}^n)$, with $0< \alpha <\infty$,  are introduced in terms of the heat semigroup $e^{-tL}$. 
We will study the relation between $\Lambda_L^\alpha (\mathbb{H}^n)$ and $\Gamma_L^\alpha (\mathbb{H}^n)$, and
give some applications of these spaces to the regularity of  the fractional powers of the operator $L$.

\subsection{The Heisenberg group $\mathbb{H}^n$}

The Heisenberg group $\mathbb{H}^n$ is the Lie group with underlying manifold $\mathbb{R}^{2n}\times \mathbb{R}$ and multiplication.
\begin{align*}
\mathbf{x} \mathbf{y} =  \bigg(\xx_1+ \yy_1, \  \dots, \  \xx_{2n} + \yy_{2n}, \
\xx_{2n+1} +\yy_{2n+1} + 2 \sum_{j=1}^n \big(\xx_{n+j}\yy_j-\xx_j \yy_{n+j}\big)\bigg)
\end{align*} 
for all points $\mathbf{x}=(\xx_1, \dots, \xx_{2n+1})$  and $\mathbf{y} = (\yy_1,\dots, \yy_{2n +1})$ in $\mathbb{H}^n$.
It is well known that for any $\x = (\xx_1,\dots, \xx_{2n+1})\in \mathbb{H}^n$, the inverse of $\x$ coincides with $-\x
=(-\yy_1,\dots, \yy_{2n+1})$, i.e., $\x^{-1} = -\x$. 
We denote the group identity of $\mathbb{H}^n$ by $\mathbf{0} := (0,\dots, 0)$

Let  $\mathfrak{h}_n$ be the Lie algebra of left-invariant vector fields on $\mathbb{H}^n$.
A basis of $\mathfrak{h}_n$ is the following:
{\small \begin{align*}
X_{2n+1} = \frac{\partial}{\partial \xx_{2n+1}}, \quad  X_j= \frac{\partial }{\partial \xx_j} +2\xx_{n+j} \frac{\partial }{\partial \xx_{2n+1}},  \quad  X_{n+j}  =\frac{\partial }{\partial \xx_{n+j}} -2\xx_j \frac{\partial }{ \partial \xx_{2n+1}}, \quad
j =1,\dots, n.
\end{align*}
All} non-trivial commutation relations are given by
\begin{align*}
 [X_j, X_{n+j}] = -4X_{2n+1},\quad j =1,\dots, n.   
\end{align*}
The  sub-Laplacian on $\mathbb{H}^n$ are defined by
\begin{align*}
\Delta_{\mathbb{H}^n}: = \sum_{j =1}^{2n} X_j^2.
\end{align*}

For any $r>0$, the dilation $\delta_r$ on $\mathbb{H}^n$ is defined by
\begin{align*}
\delta_r (\mathbf{x})  = (r \xx_1, \dots, r\xx_{2n},r^2 \xx_{2n+1}), \quad \x =  (\xx_1, \dots, \xx_{2n+1}) \in \mathbb{H}^n.
\end{align*}
The {\it Kor\'{a}nyi norm} on $\mathbb{H}^n$ is given by
\begin{align*}
|\mathbf{x}| = \left(\left(\sum_{j=1}^{2n} \xx_j^2\right)^2 +16 \xx_{2n+1}^2\right)^{1/4}, \quad \mathbf{x}   = (\xx_1, \dots, \xx_{2n+1}) \in \mathbb{H}^n.
\end{align*}
This norm is homogeneous of degree $1$ with respect to the dilations, that is, $|\delta_r(\x)| = r|\x|$ for all $\x \in
\mathbb{H}^n$ and $r >0$. Moreover, $|\cdot|$ satisfies the triangle inequality (see \cite{Cygan})
\begin{align}
|\x \y | \leq |\x| +|\y|    
\end{align}
for all $\x, \y \in \mathbb{H}^n$, 
and hence leads to a left-invariant distance $d(\x,\y) =|\y^{-1}\x|$.
The ball with radius $r$ centered at $\x$ is defined by
\begin{align*}
B(\x,r) := \{\y\in \mathbb{H}^n : d(\x,\y) <r\}.
\end{align*}

It is well known that the Haar measure $\mu$ on $\mathbb{H}^n$ coincides with the Lebesgue measure on $\mathbb{R}^
{2n}\times \mathbb{R}$.
We denote by $|E|$ the Haar measure of any measurable set $E \subset \mathbb{H}^n$, i.e.,
$|E|:=\mu(E)$. It is well-known that
\begin{align*}
|\delta_r (E)| = r^Q |E|,
\end{align*}
where $Q:= 2n+2$ is the homogeneous dimension of $\mathbb{H}^n$. Particularly,
the volume (Haar measure) of a ball $B(\x,r)$ is
\begin{align*}
|B(\x,r)| = |B(\mathbf{0},1)|r^Q =\frac{2\pi^{n+\frac{1}{2}}\Gamma(\frac{n}{2})}{(n+1)\Gamma(n)\Gamma(\frac{n+1}{2})}r^{Q}.
\end{align*}

Integrations on (measurable subsets of) $\mathbb{H}^n$ are with respect to the Haar measure $\mu$. 
Using an analogue of polar coordinates on homogeneous groups, cf. \cite[Proposition 1.15]{FS}, 
it is easy to see that
\begin{align*}
\int_{\mathbb{H}^n} (1 + |\x|)^{-(n +a)} d\mu(\x)<\infty \Longleftrightarrow   a >0.
\end{align*}
The convolution of two functions $f$ and $g$ on $\mathbb{H}^n$ are defined by
\begin{align*}
f \ast g (\x) = \int_{\mathbb{H}^n} f(\y)g(\y^{-1}\x)d\mu(\x)= \int_{\mathbb{H}^n} f(\x \y^{-1})g(\y)d\mu(\y),
\end{align*}
provided that the integrals makes sense.

\subsection{Lipschitz spaces adapted to  $L =-\Delta_{\mathbb{H}^n}+V$}

Consider the  Schr\"{o}dinger operator 
\[
L =- \Delta_{\mathbb{H}^n} + V
\]
 on the Heisenberg group $\mathbb{H}^n$. Throughout this paper, 
we assume that the potential $V$ is nonnegative and belongs to the reverse H\"{o}lder class $RH_q$ for some $q > Q/2$, that is, there exists a constant $C >0$
such that for every ball $B \subset \mathbb{H}^n$,
\begin{align} \label{eq:RHC}
\left( \frac{1}{|B|} \int_B V(\x)^q d\mu(\x)\right)^{1/q} \leq C \frac{1}{|B|} \int_B V(\x)d\mu(\x).
\end{align}

Following \cite{ShenAIF}, we define the critical radius function $\rho(\cdot)$ related to $L$ by
\begin{align} \label{eq:triangle}
 \rho(\x):=\sup\left\{ r>0 :\frac{r^2}{|B(\x, r)|} \int_{B(\x, r)} V(\y) d\mu(\y)
 \leq 1 \right\},\quad  \x \in \mathbb{H}^n.
\end{align}
Similarly as in the Euclidean case (see \cite[p. 515]{ShenAIF}), 
the reverse H\"{o}lder condition \eqref{eq:RHC} implies that
\[
\lim_{r \rightarrow 0^+} \frac{r^2}{|B(\x, r)|} \int_{B(\x, r)} V(\y) d\mu(\y) =0.
\]
Consequently $\rho (\x) >0$ for every $\x \in \mathbb{H}^n$.
We will show that (see Lemma \ref{lem:rho finite}) if
\begin{align} \label{eq:not vanishing}
\big|\{\x \in \mathbb{H}^n :  V(\x) >0\}\big| \neq 0,
\end{align}
then $0< \rho(\x) <\infty$ for every $\x \in \mathbb{H}^n$.
We will require that $\rho <\infty$ in many places in our argument. 
 Hence, we always assume that \eqref{eq:not vanishing} holds.

As a natural generalizaton of the Lipschitz spaces adapted to Schr\"{o}dinger
operators on the Euclidean spaces, we introduce the Lipschitz spaces adapted to 
Schr\"{o}dinger operators on the Heisenberg group $\mathbb{H}^n$.

\begin{definition} \label{def:Lipschitz} 
Let $0 < \alpha <2$. The Lipschitz space $\Lambda^\alpha_{L} (\mathbb{H}^n)$ adapted to $L$ is defined as 
the set of all measurable functions $f$ on $\mathbb{H}^n$ such that
\begin{align*}
 \|f\|_{\Lambda^\alpha_{L}(\mathbb{H}^n)}:= \|\rho(\cdot)^{-\alpha}f\|_{L^\infty(\mathbb{H}^n)} + 
 \sup_{|\y|>0}
  \frac{\|f(\x \y) + f(\x \y^{-1}) -2 f(\x)\|_{L^\infty(\mathbb{H}^n_\x)}}{|\y|^\alpha} <\infty,
\end{align*}
where $\|\cdot\|_{L^\infty(\mathbb{H}^n_\x)}$ means an $L^\infty$ norm with respect to the variable $\x$, i.e.,
\begin{align*}
 \|f(\x \y) + f(\x \y^{-1}) -2 f(\x)\|_{L^\infty(\mathbb{H}^n_\x)}:= \esssup_{x \in \mathbb{H}^n}
 |f(\x \y) + f(\x \y^{-1}) -2 f(\x)|.
\end{align*}
\end{definition}

We shall also introduce Lipschitz spaces in terms of the heat semigroup associated with $L$. 
For this purpose, we need to introduce 
some notions and notation. Let
\begin{align} \label{heat semigroups}
 T_t:=e^{t\Delta_{\mathbb{H}^n}} \quad \text{and} \quad T^L_t:=e^{-t L}
\end{align}
be the heat semigroups associted to $-\Delta_{\mathbb{H}^n}$ and $L =- \Delta_{\mathbb{H}^n} + V$, respectively.
Following \cite{DT}, we say that a function $f$ satisfies the {\bf heat size condition for $L$}, if $f$ is a measurable
function on $\mathbb{H}^n$ and satisfies the following properties:
\begin{itemize}
     \item for every $t>0$, $\int_{\mathbb{H}^n}e^{- \frac{|\x|^2}{t}}|f(\x)|d\mu(\x)<\infty$; 
     \item for every $t>0$, every  $\ell\in\mathbb{N}_0$, and every $\x\in\mathbb{H}^n$, 
     $\lim_{t \to \infty}\partial_t^\ell T_t^L f(\x)=0$.
     \end{itemize}  
    We denote
\begin{align*}
  \mathcal{F}_L (\mathbb{H}^n) := \left\{f : f \text{ satisfies the heat size condition for } L\right\}.  
\end{align*}

\begin{definition}
Let $\alpha >0$ and $k:= \lfloor \frac{\alpha}{2} \rfloor +1$.
The Lipschitz space $\Gamma_L^{\alpha/2} (\mathbb{H}^n)$ adapted 
to $L$ is defined as the set of all $f \in \mathcal{F}_L(\mathbb{H}^n)$ such that 
\begin{align*}
 \|f\|_{\Gamma_{L}^{\alpha/2} (\mathbb{H}^n)} := \sup_{t >0}t^{k-\frac{\alpha}{2}} \big\|\partial_t ^k T_t^L f \big\|_{L^\infty (\mathbb{H}^n)} <\infty .
\end{align*}
\end{definition}

\subsection{Main result and novelty of the paper}
The main result of the present paper is the equivalence between the pointwise and heat semigroup definitions
of the Lipschitz spaces adated to $L =-\Delta_{\mathbb{H}^n} +V$, for a suitable range of $\alpha$.

\begin{theorem} \label{thm:main-1}
Let $0 <\alpha <2 - \frac{Q}{q}$. Then 
\begin{align*}
 \Lambda_L^\alpha (\mathbb{H}^n) =\Gamma_{L}^{\alpha /2}(\mathbb{H}^n)
\end{align*}
with equivalence of their norms.
\end{theorem}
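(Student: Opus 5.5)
We prove the two inclusions separately, following the scheme of De Le\'on-Contreras and Torrea \cite{DT}. Since $0<\alpha<2-Q/q<2$, we have $k=\lfloor\alpha/2\rfloor+1=1$, so the $\Gamma$-norm is $\sup_{t>0}t^{1-\alpha/2}\|\partial_t T^L_t f\|_{L^\infty}$. Throughout we use the standard kernel estimates for $T^L_t$ on $\mathbb{H}^n$. Writing $p_t$ for the kernel of $T_t$ and $h_t$ for the kernel of $T^L_t$, the estimates are:
\begin{itemize}
\item Gaussian upper bounds with the extra decay $(1+\sqrt t/\rho(\x)+\sqrt t/\rho(\y))^{-N}$ for $h_t$ and $t\partial_t h_t$;
\item the perturbation bound $|h_t(\x,\y)-p_t(\y^{-1}\x)|\lesssim (\sqrt t/\rho(\x))^{2-Q/q}t^{-Q/2}e^{-c|\y^{-1}\x|^2/t}$ for $\sqrt t\le\rho(\x)$, obtained from Duhamel's formula and the reverse H\"older condition;
\item the bound $|T^L_t1(\x)-1|\lesssim(\sqrt t/\rho(\x))^{2-Q/q}$ for $\sqrt t\le\rho(\x)$.
\end{itemize}
We also use the slow variation of $\rho$: $\rho(\y)\sim\rho(\x)$ when $|\y^{-1}\x|\lesssim\rho(\x)$, together with the polynomial comparison otherwise (Shen-type lemmas on $\mathbb{H}^n$).

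\textbf{$\Lambda^\alpha_L\subset\Gamma^{\alpha/2}_L$.} First, $f\in\mathcal F_L$: the growth $|f(\x)|\lesssim\rho(\x)^\alpha\lesssim(1+|\x|)^{C}$ gives the integrability condition, and the decay of $h_t$ for $\sqrt t\gg\rho$ gives $\partial_t^\ell T^L_tf(\x)\to0$. Next we split $\partial_tT^L_tf(\x)$.

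\emph{Case $\sqrt t\ge\rho(\x)$.} Bound $|f(\y)|\lesssim\rho(\y)^\alpha\|f\|$ and use the extra decay factor $(\sqrt t/\rho)^{-N}$ in $|\partial_t h_t|\lesssim t^{-1}t^{-Q/2}e^{-c|\cdot|^2/t}(\ldots)^{-N}$. This yields $\lesssim t^{-1+\alpha/2}$.

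\emph{Case $\sqrt t<\rho(\x)$.} Write $\partial_tT^L_t=\partial_tT_t+\partial_t(T^L_t-T_t)$. For the free part, $\partial_tp_t$ is even under $\y\mapsto\y^{-1}$ and has integral zero, so
\[
\partial_tT_tf(\x)=\tfrac12\int\partial_tp_t(\y)\bigl(f(\x\y)+f(\x\y^{-1})-2f(\x)\bigr)\,d\mu(\y),
\]
which is $\lesssim t^{-1}\int|\y|^\alpha t^{-Q/2}e^{-c|\y|^2/t}\,d\mu(\y)\sim t^{-1+\alpha/2}$. For the perturbation part, subtract $f(\x)$:
\[
\partial_t(T^L_t-T_t)f(\x)=\int\partial_t(h_t-p_t)(\x,\y)\,(f(\y)-f(\x))\,d\mu(\y)+f(\x)\,\partial_tT^L_t1(\x).
\]
The second term is $\lesssim\rho(\x)^\alpha t^{-1}(\sqrt t/\rho(\x))^{2-Q/q}\le t^{-1+\alpha/2}$, precisely because $\alpha<2-Q/q$; this is where the restriction on $\alpha$ enters. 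The first term needs the first-order bound $|f(\y)-f(\x)|\lesssim|\y^{-1}\x|^{\alpha}$ (for $\alpha<1$), or more generally $\lesssim|\y^{-1}\x|^{\min(\alpha,1)}\rho^{\alpha-\min(\alpha,1)}$. We derive this from the second-difference condition plus the size condition by the usual Zygmund telescoping along dilations $\y\mapsto\delta_{2^j}\y$ up to scale $\rho$. Combined with the factor $(\sqrt t/\rho)^{2-Q/q}$ in the time derivative of the perturbation kernel, obtained through a Cauchy-integral/analyticity argument, this again gives $t^{-1+\alpha/2}$.

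\textbf{$\Gamma^{\alpha/2}_L\subset\Lambda^\alpha_L$.} Since $f\in\mathcal F_L$, we have $f=-\int_0^\infty\partial_tT^L_tf\,dt$ pointwise. For the size bound, split at $t=\rho(\x)^2$. The small-$t$ piece of $f$, namely $f-T^L_{\rho^2}f$, is bounded by $\int_0^{\rho^2}t^{-1+\alpha/2}\,dt\sim\rho^\alpha$. For $T^L_{\rho^2}f(\x)=-\int_{\rho^2}^\infty\partial_sT^L_sf\,ds$, the plain bound $s^{-1+\alpha/2}$ is not integrable at infinity. Instead write $\partial_sT^L_sf=T^L_{s/2}\partial_sT^L_{s/2}f$ and use $\|T^L_{s/2}\|_{L^\infty\to L^\infty}$ at the point $\x$, which is $\lesssim(\sqrt s/\rho(\x))^{-N}$ by the extra decay. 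This gives convergence and $|f(\x)|\lesssim\rho(\x)^\alpha$.

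For the second difference, fix $\y$ and split at $t=|\y|^2$. For $t<|\y|^2$, each of the three terms is bounded trivially, giving $\int_0^{|\y|^2}t^{-1+\alpha/2}\,dt\sim|\y|^\alpha$. For $t>|\y|^2$, apply the second difference to the kernel: write $\partial_tT^L_t=T^L_{t/2}\partial_tT^L_{t/2}$ and use the second-order H\"older regularity of $h_{t/2}(\cdot,\mathbf z)$ in the first variable,
\[
|h(\x\y)+h(\x\y^{-1})-2h(\x)|\lesssim\bigl(|\y|/\sqrt t\bigr)^{2-Q/q}t^{-Q/2}e^{-c|\cdot|^2/t}.
\]
This follows from the smoothness of $p_t$ (second order, via Taylor's formula on $\mathbb{H}^n$, where the symmetric difference kills the first-order horizontal terms) plus a perturbation term of H\"older order $2-Q/q$ in the spatial variable, obtained from the Duhamel formula and $V\in RH_q$. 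The contribution is then $\int_{|\y|^2}^\infty(|\y|/\sqrt t)^{2-Q/q}t^{-1+\alpha/2}\,dt\sim|\y|^\alpha$, convergent exactly because $\alpha<2-Q/q$.

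\textbf{Main obstacle.} The crux is proving these kernel estimates, especially the second-difference regularity of $h_t$ of order $2-Q/q$ and the time-derivative perturbation bound. On $\mathbb{H}^n$ the difference $f(\x\y)+f(\x\y^{-1})-2f(\x)$ involves right translations, so the stratified Taylor expansion has to be combined with Duhamel's formula $h_t-p_t=-\int_0^t\int p_{t-s}V h_s$. The $V$-integrals are controlled via the Heisenberg analogue of Shen's estimate $\int_{B(\x,r)}V\lesssim r^{Q-2}(r/\rho(\x))^{2-Q/q}$ for $r\le\rho(\x)$. I would establish these as lemmas first, adapting \cite{DT}.
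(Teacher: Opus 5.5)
Your argument for $\Lambda^\alpha_L(\mathbb{H}^n)\subset\Gamma^{\alpha/2}_L(\mathbb{H}^n)$ is essentially the paper's. The free part is the ``$\lesssim$'' half of Lemma~\ref{characterizatoin of Lip heat}, and the perturbation part is Proposition~\ref{comparison of heat semigroups}. Subtracting $f(\x)$ and invoking first-order regularity of $f$ is superfluous. For $\sqrt t\le\rho(\x)$ the size bound alone, $|f(\y)|\lesssim\rho(\y)^\alpha\lesssim\rho(\x)^\alpha(1+|\y^{-1}\x|/\sqrt t)^\alpha$, combined with the bound $t^{-1}(\sqrt t/\rho(\x))^{2-Q/q}\omega_t(\y^{-1}\x)$ for $\partial_t(K^L_t(\x,\y)-H_t(\y^{-1}\x))$, already gives $t^{-1+\alpha/2}(\sqrt t/\rho(\x))^{2-Q/q-\alpha}\le t^{-1+\alpha/2}$. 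That is why the comparison proposition needs only $\rho(\cdot)^{-\alpha}f\in L^\infty$.

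The telescoping you describe is also wrong as stated. Replacing $\x$ by $\x\y$ in the second difference gives $f(\x\y\y)-2f(\x\y)+f(\x)$, and $\y\y=2\y$ in the notation $t\y$ of Lemma~\ref{lem:FTC}, which is not $\delta_2\y$. You would have to telescope along $2^j\y$, using $|2^j\y|\le 2^j|\y|$. Even then, at $\alpha=1$ the claimed bound $|\y^{-1}\x|$ fails by a logarithm. Since the step can simply be dropped, none of this affects the outcome.

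For $\Gamma^{\alpha/2}_L(\mathbb{H}^n)\subset\Lambda^\alpha_L(\mathbb{H}^n)$ your route genuinely differs from the paper's. Your size estimate is the content of Lemma~\ref{lem:mixed weighted estimates} and Proposition~\ref{growth control of heat Lipschitz}. For the second difference, however, the paper never touches the regularity of $K^L_t$. Once $\rho(\cdot)^{-\alpha}f\in L^\infty$, it applies Proposition~\ref{comparison of heat semigroups} a second time to pass to $\sup_t t^{1-\alpha/2}\|\partial_tT_tf\|_{L^\infty}$. It then obtains the second difference from the smooth sub-Laplacian kernel, via Lemma~\ref{lem:XjXl} and Lemma~\ref{lem:FTC} applied twice. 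All dependence on $V$ is thus concentrated in one size estimate for $\partial_t(K^L_t-H_t)$, used in both directions.

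Your approach instead needs a second-difference estimate of order exactly $2-Q/q$ for $K^L_t(\cdot,\mathbf{z})$ under right translations. This is not in \cite{DT}, and when $q>Q$ it is a genuinely second-order statement, so ``adapting \cite{DT}'' will not supply it. It is nevertheless true along the lines you sketch. In Duhamel's formula, the range $t-s<|\y|^2$ contributes $(|\y|/\rho(\x))^{2-Q/q}$ by Lemma~\ref{lem:rapid V}. The range $t-s\ge|\y|^2$ uses the symmetric Taylor expansion of $H_{t-s}$ to gain $|\y|^2/(t-s)$, and the remaining time integral converges because $Q/(2q)>0$. For $\sqrt t>\rho(\x)$, one factors $K^L_t$ through time $\rho(\x)^2$ and uses the extra decay.

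So your argument is viable and more direct: it needs no auxiliary space $\widetilde{\Gamma}^{\alpha/2}(\mathbb{H}^n)$ and no free characterization. The price is a harder kernel lemma, plus the strict inequality forced by $\int_{|\y|^2}^\infty(|\y|/\sqrt t)^{2-Q/q}t^{-1+\alpha/2}\,dt$. The paper's reduction to the free semigroup is more economical, and its chain of estimates works as stated for $\alpha\le 2-Q/q$.
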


To prove Theorem \ref{thm:main-1}, we will adopt the idea from \cite{DT}, where such a characterization 
was established in the Euclidean setting. 
However, unlike the Euclidean setting, the Heisenberg group 
is a non-commutative, stratified nilpotent Lie group with a sub-Riemannian structure. 
This lack of commutativity and the presence of a non-isotropic dilation structure introduce substantial new difficulties in the characterization of Lipschitz spaces adapted to Schr\"{o}dinger operators. 
For example, the heat kernel estimates for the sub-Laplacian $\Delta_{\mathbb{H}^n}$ 
 involve the homogeneous dimension 
$Q=2n+2$ and exhibit anisotropic decay in the vertical direction. As a consequence, 
the second-order difference condition in the definition of $\Lambda^\alpha_L (\mathbb{H}^n)$ 
 involves products of the form $f(\x\y) +f(\x\y^{-1}) -2f(\x)$,
which cannot be reduced to Euclidean translations. This requires a refined Taylor formula on the group (see Lemma 2.7) that uses the left-invariant vector fields and a careful control of the derivatives of the heat semigroup along 
these non-commuting directions.
 
\medskip
The rest of this paper is organized as follows. In Section \ref{sec:preliminaries}, we give some preliminaries,
including properties of the critical radius function $\rho$, some estimates of the heat kernels of the operators 
$-\Delta_{\mathbb{H}^n}$ and $L =-\Delta_{\mathbb{H}^n} +V$, and a finite increment formula on $\mathbb{H}^n$.
Section \ref{sec:proof of main} is devoted to the proof of our main result, Theorem \ref{thm:main-1}.
Some applications of the Lipschitz spaces adapted to $L$, including the behavior of the Bessel potentials, fractional integrals and fractional ``sub-Laplacian''
associated to the operator $L =-\Delta_{\mathbb{H}^n}+V$ on the Lipschitz spaces $\Gamma_L^\alpha(\mathbb{R}^n)$, 
will be given in Section \ref{sec:regularity}.

\medskip
\noindent
{\it Notation.} The set of all positive integers is denoted by $\mathbb{N}$,
while the set of all nonnegative integers is denoted by $\mathbb{N}_0$. If $a \in (0,\infty)$, we denote 
$\lfloor a \rfloor : =\max\{k \in \mathbb{N}_0: k \leq a\}$. 
Throughout, we will use $c,c',C,C'$ to denote positive constants, which are independent of the main
variables involved and whose values may vary at every occurrence.
By writing $f \lesssim g$, we mean $f \leq Cg$.
The notation $f \asymp g$ will stand for $C \leq f/g \leq C'$.





\section{Preliminaries} \label{sec:preliminaries}
\subsection{The critical  radius function}
The following property of the critical radius function $\rho$ was proved in \cite{Lu}.
\begin{lemma}\label{lem:property of rho}
There exist constants $C > 0$ and $k_0 \geq 1$ such that for all $\x, \y \in \mathbb{H}^n$, 
\begin{align*}
 C^{-1} \rho(\x) \left(1 + \frac{|\y^{-1}\x|}{\rho(\x)}\right)^{-k_0} \le \rho(\y) \le C \rho(\x) \left(1 + \frac{|\y^{-1}\x|}{\rho(\x)}\right)^{\frac{k_0}{k_0+1}}.
 \end{align*}
\end{lemma}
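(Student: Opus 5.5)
The plan is to transfer Shen's Euclidean argument \cite{ShenAIF} to $\mathbb{H}^n$. It uses only balls of the Kor\'anyi metric, the triangle inequality for $d$, the homogeneity $|B(\x,r)|=|B(\mathbf 0,1)|r^Q$, and the reverse H\"older inequality \eqref{eq:RHC}. Non-commutativity never enters, since everything is phrased through $d(\x,\y)=|\y^{-1}\x|$. Write
\[
\Psi(\x,r):=\frac{r^2}{|B(\x,r)|}\int_{B(\x,r)}V\,d\mu ,\qquad \delta:=2-\frac{Q}{q}>0 .
\]

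\textbf{Step 1: two growth estimates for $\Psi$.} First, for $0<r<R$,
\[
\Psi(\x,r)\le C\,(r/R)^{\delta}\,\Psi(\x,R).
\]
This comes from H\"older's inequality on $B(\x,r)$, then enlarging to $B(\x,R)$ in the $L^q$ norm, then \eqref{eq:RHC} on $B(\x,R)$. Second, $V\in RH_q$ implies $V\,d\mu$ is doubling: $\int_{B(\x,2r)}V\le C_0\int_{B(\x,r)}V$. This is standard, since $RH_q\subset A_\infty$ on spaces of homogeneous type. Iterating doubling gives, for $R\ge r$,
\[
\Psi(\x,R)\le C\,(R/r)^{\mu_0}\,\Psi(\x,r)
\]
for some $\mu_0>0$ depending on $C_0$ and $Q$. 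Combined with continuity of $r\mapsto\Psi(\x,r)$ and $\Psi(\x,r)\to0$ as $r\to0$ (from the first estimate), this gives $\Psi(\x,\rho(\x))\asymp 1$. Finiteness of $\rho$ is guaranteed by Lemma~\ref{lem:rho finite}.

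\textbf{Step 2: comparable points.} If $|\y^{-1}\x|\le C\rho(\x)$, then $\rho(\y)\asymp\rho(\x)$. The inclusions $B(\y,r)\subset B(\x,r+|\y^{-1}\x|)$ and $B(\x,r)\subset B(\y,r+|\y^{-1}\x|)$, together with doubling and the first growth estimate, show that $\Psi(\y,c\rho(\x))\le 1$ and $\Psi(\y,C'\rho(\x))>1$.

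\textbf{Step 3: far points, lower bound.} Let $R:=|\y^{-1}\x|>\rho(\x)$. The second growth estimate gives $\Psi(\x,2R)\lesssim (R/\rho(\x))^{\mu_0}$. Set $r:=c\,\rho(\x)(R/\rho(\x))^{-k_0}$ with $k_0$ chosen in terms of $\mu_0$ and $\delta$. Using $B(\y,r)\subset B(\x,2R)$ and the volume ratio $(2R/r)^Q$, I will show $\Psi(\y,r)\le 1$. This forces $\rho(\y)\ge r$, which is the left inequality. The exponent bookkeeping is the only delicate point. It is cleaner to first compare $\Psi(\y,2R)\lesssim\Psi(\x,4R)$ by the inclusion $B(\y,2R)\subset B(\x,3R)$ and doubling, and then descend from radius $2R$ to $r$ at $\y$ using the first growth estimate, which costs the factor $(r/R)^{\delta}$. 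With this route, $k_0$ can be taken to be $\mu_0/\delta$.

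\textbf{Step 4: far points, upper bound.} The upper bound follows from Step 3 with the roles of $\x$ and $\y$ exchanged: $\rho(\x)\ge C^{-1}\rho(\y)\bigl(1+R/\rho(\y)\bigr)^{-k_0}$. If $\rho(\y)\le\rho(\x)$ there is nothing to prove. Otherwise this reads $\rho(\y)^{k_0+1}\lesssim \rho(\x)(\rho(\y)+R)^{k_0}$, and if $\rho(\y)\ge R$ it also gives $\rho(\y)\lesssim\rho(\x)$. In the remaining case $R\ge\rho(\y)$ we get $\rho(\y)^{k_0+1}\lesssim\rho(\x)R^{k_0}$, which is exactly $\rho(\y)\lesssim\rho(\x)(R/\rho(\x))^{k_0/(k_0+1)}$.

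I expect the main obstacle to be the exponent bookkeeping in Step 3, namely making the comparison between $\Psi$ at $\x$ and at $\y$ pay only the doubling growth factor. The geometric parts are routine because of the triangle inequality for the Kor\'anyi norm.
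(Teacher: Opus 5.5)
Your proposal is correct and is the standard argument. The paper gives no proof of its own; it cites \cite{Lu}, where Shen's Euclidean proof is carried over to the sub-Riemannian setting along the lines you outline: reverse H\"older decay of $\Psi$, doubling of $V\,d\mu$, comparability at nearby points, the lower bound at far points, and the upper bound by exchanging $\x$ and $\y$. The one thing to discard is the first route you mention in Step 3: bounding $\Psi(\y,r)$ by $\Psi(\x,2R)$ through the volume ratio alone gives the growing factor $(2R/r)^{Q-2}$, so the decay has to come from the reverse H\"older estimate applied at $\y$, as in your second route (where any $k_0\ge \mu_0/\delta-1$, in particular your choice $k_0=\mu_0/\delta$, suffices).
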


\begin{remark}
From Lemma \ref{lem:property of rho} it is easy to see that $\rho(\x_0) < \infty$ for some $\x_0 \in \mathbb{H}^n$ 
if and only if $\rho(\x)  <\infty$ for every $\x \in \mathbb{H}^n$.
\end{remark}

Using Lemma \ref{lem:property of rho} and the reverse H\"{o}lder condition \eqref{eq:RHC},
and arguing similarly as the proof of \cite[Lemma 2.2]{HXH}, one  can show the following result.
\begin{lemma} \label{lem:rho finite}
The following two conditions are equivalent:
\begin{enumerate}[\rm (i)]
    \item $\big|\{\x \in \mathbb{H}^n: V(\x) >0\}\big| \neq 0$.
    \item $\rho (\x) <\infty$ for every $\x \in \mathbb{H}^n$.
\end{enumerate}
\end{lemma}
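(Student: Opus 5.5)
We prove the two implications separately. By the Remark following Lemma \ref{lem:property of rho}, condition (ii) is equivalent to the finiteness of $\rho$ at a single point.

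\emph{(ii) $\Rightarrow$ (i).} We argue by contraposition. Suppose $V=0$ almost everywhere. Then for every $\x$ and every $r>0$,
\[
\frac{r^2}{|B(\x,r)|}\int_{B(\x,r)}V\,d\mu=0\le 1 .
\]
So the set in the definition \eqref{eq:triangle} of $\rho(\x)$ is all of $(0,\infty)$, which gives $\rho(\x)=\infty$ for every $\x$. Hence (ii) fails.

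\emph{(i) $\Rightarrow$ (ii).} Assume $|\{V>0\}|>0$. By the Remark it suffices to show $\rho(\mathbf{0})<\infty$. Because $V\ge 0$ and $V$ is not a.e.\ zero, and $V\in L^q_{\rm loc}$ by \eqref{eq:RHC}, there is $R_0>0$ such that
\[
c_0:=\int_{B(\mathbf{0},R_0)}V\,d\mu>0 .
\]
For $r\ge R_0$ we have $B(\mathbf{0},R_0)\subset B(\mathbf{0},r)$, and therefore
\[
\frac{r^2}{|B(\mathbf{0},r)|}\int_{B(\mathbf{0},r)}V\,d\mu\;\ge\;\frac{c_0\,r^2}{|B(\mathbf{0},1)|\,r^{Q}}=C\,c_0\,r^{2-Q}.
\]
Since $Q=2n+2>2$, this lower bound decays as $r\to\infty$, so it does not by itself show that the quantity exceeds $1$ for large $r$. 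This is the main obstacle, and the reverse H\"older condition is what overcomes it, through its doubling consequence.

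We use the standard fact, the analogue of Shen's Lemma 1.2 (\cite{ShenAIF}), which follows from \eqref{eq:RHC} by H\"older's inequality: for $0<r<R$,
\[
\frac{r^2}{|B(\x,r)|}\int_{B(\x,r)}V\,d\mu\;\le\;C\Big(\frac{r}{R}\Big)^{2-Q/q}\frac{R^2}{|B(\x,R)|}\int_{B(\x,R)}V\,d\mu .
\]
To derive it, apply H\"older's inequality on $B(\x,r)\subset B(\x,R)$ to get
\[
\int_{B(\x,r)}V\le |B(\x,r)|^{1-1/q}\Big(\int_{B(\x,R)}V^q\Big)^{1/q},
\]
and then bound the $L^q$ average over $B(\x,R)$ by \eqref{eq:RHC}. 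Dividing by $|B(\x,r)|$ and multiplying by $r^2$ produces the stated factor $(r/R)^{2-Q/q}$.

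Now fix $r=R_0$, so the left-hand side equals $a:=R_0^{2-Q}c_0/|B(\mathbf{0},1)|>0$. Rearranging the inequality with $R\to\infty$ gives
\[
\frac{R^2}{|B(\mathbf{0},R)|}\int_{B(\mathbf{0},R)}V\,d\mu\;\ge\;C^{-1}a\Big(\frac{R}{R_0}\Big)^{2-Q/q}\longrightarrow\infty,
\]
because $q>Q/2$ makes the exponent $2-Q/q$ positive. Hence there is $R_1$ such that the quantity exceeds $1$ for every $R\ge R_1$. Every $r$ in the set defining $\rho(\mathbf{0})$ therefore satisfies $r<R_1$, so $\rho(\mathbf{0})\le R_1<\infty$. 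By the Remark, $\rho(\x)<\infty$ for every $\x\in\mathbb{H}^n$, which is (ii).
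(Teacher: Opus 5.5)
Your proof is correct and follows essentially the route the paper indicates (via Lemma~\ref{lem:property of rho} and the reverse H\"older condition). The case $V=0$ a.e.\ is trivial. For the converse, the H\"older/reverse H\"older growth estimate $\frac{r^2}{|B(\x,r)|}\int_{B(\x,r)}V\,d\mu\le C(r/R)^{2-Q/q}\frac{R^2}{|B(\x,R)|}\int_{B(\x,R)}V\,d\mu$ forces the scaled averages to exceed $1$ for all large radii, and the Remark then passes from one point to every point. Since your argument works verbatim with any center $\x$ in place of $\mathbf{0}$, you could in fact bypass the Remark, and with it Lemma~\ref{lem:property of rho}, altogether.
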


\begin{remark} \label{rmk:growth of rho}
If  $\big|\{\x \in \mathbb{H}^n: V(\x) >0\}\big| \neq 0$, then there exists a constant $C$ such that
\begin{align} \label{eq:rho and 1+x}
  \rho(\x)  \leq C(1 + |\x|)^{\frac{k_0}{k_0 +1}}.
\end{align}
Indeed, since $0< \rho(\mathbf{0}) <\infty$, it follows from Lemma \ref{lem:property of rho} that
\begin{align*}
 \rho(\x) \leq  C\bigg(1 + \frac{|\x|}{\rho(\mathbf{0})}\bigg)^{\frac{k_0}{k_0 +1}}   \leq C \bigg(1 + \frac{1}{\rho(\mathbf{0})}\bigg)^{\frac{k_0}{k_0 +1}} (1 + |\x|)^{\frac{k_0}{k_0 +1}} =C'(1 +|\x|)^{\frac{k_0}{k_0 +1}}.
\end{align*}
\end{remark}

\subsection{Heat kernel  estimates}
It is well known that the heat  semigroup $T_t =e^{t\Delta_{\mathbb{H}^n}}$  consists of
convolution operators. We denote the convolution kernel of $T_t$ by $H_t(\x)$.
\begin{lemma} \label{lem:heat kernel of sub-Laplacian} {\rm  (see \cite[Theorem IV.4.2]{VSCC})}
For any $\ell, m \in \mathbb{N}_0$, and any $(i_1,\dots, i_m) \in \{1,\dots, 2n+1\}^m$, there exist constants
$C,c>0$ such that for all $t >0$ and $\x \in \mathbb{H}^n$,
\begin{align*}
\big|\partial_t^\ell X_{i_1}\cdots X_{i_m} H_t (\x)\big| \leq C t^{-\ell -\frac{Q + \sigma (i_1) +\cdots +\sigma (i_m)}{2}}e^{-\frac{|\x|^2}{ct}},
\end{align*}
 where  $\sigma(i)$, $i =1,\dots ,2n+1$,
denotes the homogeneous degree of   $X_i$, i.e.,
\begin{align} \label{eq:homogeneous degree of X}
  \sigma(i) = \begin{cases}
1, & i =1,\dots, 2n,\\
2, & i =2n+1.
  \end{cases}
\end{align}
\end{lemma}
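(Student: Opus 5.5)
The statement immediately above is Lemma \ref{lem:heat kernel of sub-Laplacian}, the Gaussian bounds for $\partial_t^\ell X_{i_1}\cdots X_{i_m}H_t$. I would prove it by reducing everything to $t=1$ via homogeneity, and then using smoothness and Gaussian decay of the single kernel $H_1$.

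\textbf{Step 1: scaling.} Since $\Delta_{\mathbb{H}^n}$ is homogeneous of degree $2$ under the dilations $\delta_r$, and Haar measure scales by $r^Q$, uniqueness of the heat kernel gives
\begin{align*}
H_t(\x)=t^{-Q/2}H_1\big(\delta_{t^{-1/2}}(\x)\big).
\end{align*}
Each $X_i$ is homogeneous of degree $\sigma(i)$, so $X_i\big(g\circ\delta_{s}\big)=s^{\sigma(i)}(X_ig)\circ\delta_s$. Hence
\begin{align*}
X_{i_1}\cdots X_{i_m}H_t(\x)=t^{-\frac{Q+\sigma(i_1)+\cdots+\sigma(i_m)}{2}}\big(X_{i_1}\cdots X_{i_m}H_1\big)\big(\delta_{t^{-1/2}}(\x)\big).
\end{align*}
The heat equation $\partial_tH_t=\Delta_{\mathbb{H}^n}H_t$ converts each $\partial_t$ into $\Delta_{\mathbb{H}^n}=\sum_{j\le 2n}X_j^2$, which is a sum of words of total homogeneous degree $2$. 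This produces the extra factor $t^{-\ell}$. Since $|\delta_{t^{-1/2}}\x|^2=|\x|^2/t$, the lemma reduces to the single estimate
\begin{align*}
|X_{j_1}\cdots X_{j_p}H_1(\x)|\le Ce^{-|\x|^2/c}
\end{align*}
for every finite word in the $X_j$.

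\textbf{Step 2: Gaussian bounds at $t=1$.}
\begin{itemize}
\item $H_1$ is smooth by H\"ormander's hypoellipticity theorem, since $\partial_t-\Delta_{\mathbb{H}^n}$ is hypoelliptic.
\item The upper bound $H_1(\x)\le Ce^{-|\x|^2/c}$ follows from Davies' perturbation method, using the Carnot--Carath\'eodory distance, which is equivalent to the Kor\'anyi norm. Alternatively one can use the explicit Gaveau--Hulanicki formula for $H_t$ on $\mathbb{H}^n$.
\item For derivatives, apply the semigroup property $H_1=H_{1/2}*H_{1/2}$. Right-invariant vector fields commute with left translations, and any left-invariant $X_j$ can be moved onto a factor of the convolution. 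Then local parabolic regularity (a Harnack or Moser-type estimate on the unit parabolic cylinder around $\x$) bounds $|X^JH|$ by a constant times $\sup H$ over a slightly enlarged cylinder. That supremum is still $\lesssim e^{-|\x|^2/c'}$, at the cost of a smaller constant $c'$.
\end{itemize}

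\textbf{Main obstacle.} The hard step is the derivative Gaussian bound in Step 2, i.e.\ controlling derivatives uniformly in $\x$ with Gaussian decay. I would first try the explicit Fourier-in-the-center formula for $H_1$: it is an integral of $\big(\lambda/\sinh\lambda\big)^n\exp(\cdots)$, and contour shifting gives Gaussian decay for all derivatives directly. If that becomes messy, I would fall back on the general Varopoulos--Saloff-Coste--Coulhon argument, using the local regularity estimate combined with the scaling of Step 1.
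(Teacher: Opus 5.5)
The paper does not prove this lemma; it quotes it from \cite[Theorem IV.4.2]{VSCC}, where Gaussian bounds for time and space derivatives of heat kernels of H\"ormander sums of squares are established in a more general Lie group setting. Bounds for words in the horizontal fields are enough there, since $X_{2n+1}=-\frac14[X_1,X_{n+1}]$.

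Your sketch is instead a correct, self-contained route that uses the homogeneous structure directly. The identities $H_t(\x)=t^{-Q/2}H_1(\delta_{t^{-1/2}}\x)$, $X_i(g\circ\delta_s)=s^{\sigma(i)}(X_ig)\circ\delta_s$ and $\partial_t^\ell H_t=\Delta_{\mathbb{H}^n}^\ell H_t$ produce exactly the factor $t^{-\ell-(Q+\sigma(i_1)+\cdots+\sigma(i_m))/2}$. This includes the weight $\sigma(2n+1)=2$ without any commutator manipulation, and everything reduces to $t=1$. The citation buys generality and spares reproving the two inputs your route still treats as black boxes. These are the Gaussian bound for $H_1$ (hence for $H_s$, $s\in[1/2,3/2]$, by scaling) and a derivative estimate that is uniform in the base point.

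That derivative step needs tightening. Harnack or Moser estimates control only the solution itself, not its derivatives. What you need is the higher-order interior bound
$\sup_{(3/4,5/4)\times B(\mathbf{0},1/2)}|X^Ju|\le C_J\sup_{(1/2,3/2)\times B(\mathbf{0},1)}|u|$
for bounded solutions of $(\partial_s-\Delta_{\mathbb{H}^n})u=0$. This follows from hypoellipticity via the closed graph theorem, or from H\"ormander's subelliptic estimates plus Sobolev embedding. Apply it to $u(s,\y)=H_s(\x\y)$, which is again a solution by left-invariance. Since $\x B(\mathbf{0},1)=B(\x,1)$, this gives $|X^JH_1(\x)|\le C_J\sup_{s\in(1/2,3/2),\,\y\in B(\x,1)}H_s(\y)\lesssim e^{-|\x|^2/c'}$, with $C_J$ independent of $\x$.

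The convolution remark is unnecessary and, as phrased, incorrect. Left-invariant fields satisfy $X(f*g)=f*(Xg)$ and commute with left translations, whereas right-invariant fields commute with right translations.
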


In general, the heat semigroup $T_t^L =e^{-tL}$ associated to $L =-\Delta_{\mathbb{H}^n} +V$ consists of non-convolution integral operators. We denote the integral kernel of $T_t^L$ by $K_t^L (\x, \y)$. Then we have the following estimate.

\begin{lemma}\label{lem:kernel estimatiaon}
For every $\ell \in \mathbb{N}_0$ and $N >0$, there exist constants $C,c>0$ such that for all $t>0$ and $\x, \y \in \mathbb{H}^n$,
\begin{align} \label{eq:kernel for L}
\big|\partial_t^\ell K_t^L(\x, \y)\big| \le C t^{-\ell -\frac{Q}{2}}e^{-\frac{|\y^{-1}\x|^2}{ct}}
 \bigg(1 + \frac{\sqrt{t}}{\rho(\x)} + \frac{\sqrt{t}}{\rho(\y)}\bigg)^{-N}.
\end{align}
\end{lemma}
 \begin{proof}
It is showed in \cite[Lemma 8]{LinLiu} that 
the semigroup $\{T_t:t>0\}$ extends to a holomorphic semigroup  $\{T_\zeta :\Ree (\zeta) >0\}$ on  
$L^2 (\mathbb{H}^n)$ and its kernel $K_\zeta^L (\x,\y)$ satisfies 
\begin{align*}
|K_\zeta^L(\x,\y)| \leq C_N (\Ree \zeta)^{-\frac{Q}{2}} e^{-\frac{|\y^{-1}\x|^2}{c \Ree (\zeta)}} \left( 1 + \frac{\sqrt{\Ree \zeta}}{\rho(\x)} + \frac{\sqrt{\Ree \zeta}}{\rho(\y )} \right)^{-N}  \quad \text{for } |\arg \zeta| <\frac{\pi}{4}.
\end{align*}
Using Cauchy's integral formula and the above estimate, one can deduce \eqref{eq:kernel for L}. See
\cite[Lemma 2.1]{DT} or \cite[Proposition 3.2]{BHH} for the details.
 \end{proof}

\subsection{A finite increment formula on $\mathbb{H}^n$} 
The following finite increment formula on $\mathbb{H}^n$ will be useful in the proof of our main theorem. It may be regarded as the first-order Taylor formula with integral  remainder on $\mathbb{H}^n$.
\begin{lemma} \label{lem:FTC}
Suppose $f \in C^1(\mathbb{H}^n)$. Then for any two points $\mathbf{x}=(\xx_1,\dots,\xx_{2n+1})$ and $\mathbf{y} = (\mathrm{y}_1,\dots,\mathrm{y}_{2n+1})$ in $\mathbb{H}^n$, we have
\begin{align*}
 f(\mathbf{x}\mathbf{y}) - f(\mathbf{x}) =\int_0^1 \sum_{j =1}^{2n+1} \mathrm{y}_j(X_j f)\big(\x(t\y)
 \big)dt,   
\end{align*}
where $t\y:= (t\yy_1,\dots, t\yy_{2n+1})$.
\end{lemma}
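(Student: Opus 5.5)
The plan is to reduce the statement to the one-variable fundamental theorem of calculus along the curve $t \mapsto \x(t\y)$. Fix $\x,\y \in \mathbb{H}^n$ and set
\begin{align*}
g(t) := f\big(\x(t\y)\big), \qquad t \in [0,1].
\end{align*}
Since $\x(0\y) = \x\mathbf{0} = \x$ and $\x(1\y) = \x\y$, we have $g(0) = f(\x)$ and $g(1) = f(\x\y)$. The curve $\gamma(t) := \x(t\y)$ has polynomial coordinates, so $g \in C^1([0,1])$ because $f \in C^1(\mathbb{H}^n)$. Hence $f(\x\y) - f(\x) = \int_0^1 g'(t)\,dt$, and it suffices to prove the pointwise identity
\begin{align*}
g'(t) = \sum_{j=1}^{2n+1} \yy_j (X_j f)\big(\gamma(t)\big).
\end{align*}

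To obtain this, I would first compute $\gamma(t)$ explicitly from the group law. For $1 \le i \le 2n$ we have $\gamma_i(t) = \xx_i + t\yy_i$. For the last coordinate,
\begin{align*}
\gamma_{2n+1}(t) = \xx_{2n+1} + t\yy_{2n+1} + 2t\sum_{j=1}^n \big(\xx_{n+j}\yy_j - \xx_j\yy_{n+j}\big).
\end{align*}
So $\gamma'(t)$ has components $\yy_i$ for $i \le 2n$, and last component $\yy_{2n+1} + 2\sum_{j}(\xx_{n+j}\yy_j - \xx_j\yy_{n+j})$, which is independent of $t$. By the Euclidean chain rule,
\begin{align*}
g'(t) = \sum_{i=1}^{2n+1} \gamma_i'(t)\,\frac{\partial f}{\partial \xx_i}\big(\gamma(t)\big).
\end{align*}

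Next I would expand the right-hand side using the explicit formulas for $X_j$ at the point $p = \gamma(t)$. The $\partial/\partial\xx_i$ components for $i \le 2n$ are clearly $\yy_i$. The coefficient of $\partial/\partial\xx_{2n+1}$ is
\begin{align*}
\yy_{2n+1} + 2\sum_{j=1}^n \big(p_{n+j}\yy_j - p_j\yy_{n+j}\big), \qquad p_j = \xx_j + t\yy_j.
\end{align*}
The $t$-dependent contributions are $2t\sum_j(\yy_{n+j}\yy_j - \yy_j\yy_{n+j}) = 0$, so this coefficient equals $\gamma_{2n+1}'(t)$. Therefore $\sum_j \yy_j X_j$ evaluated at $\gamma(t)$ coincides, as a Euclidean vector, with $\gamma'(t)$. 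Combined with the chain rule, this gives the pointwise identity for $g'(t)$, and integrating over $[0,1]$ proves the lemma.

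The only step requiring care is this cancellation. Conceptually, it reflects that $t \mapsto t\y$ is a one-parameter subgroup: $(s\y)(t\y) = ((s+t)\y)$, because the symplectic term of $\y$ with itself vanishes. Its generator is the left-invariant field $\sum_j \yy_j X_j$, whose value at $\mathbf{0}$ is the vector $(\yy_1,\dots,\yy_{2n+1})$. I expect no genuine obstacle beyond verifying this cancellation, since everything else is the standard fundamental theorem of calculus.
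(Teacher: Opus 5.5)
Your proof is correct, and its skeleton is the paper's: both apply the fundamental theorem of calculus to $t\mapsto f(\x(t\y))$ on $[0,1]$, so everything rests on the identity $\frac{d}{dt}f(\x(t\y))=\sum_j \yy_j (X_jf)(\x(t\y))$. The difference is in how that identity is justified.

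The paper argues Lie-theoretically. It quotes the formula $\exp(\sum_j \yy_jX_j)=\y$ for the exponential map in these coordinates, so $t\mapsto t\y$ is the one-parameter subgroup generated by $Y=\sum_j\yy_jX_j$. It then uses, without comment, the left-invariance of $Y$ to conclude that $t\mapsto \x(t\y)$ is an integral curve of $Y$.

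You instead verify the identity by hand: the explicit group law, the Euclidean chain rule, and the cancellation $2t\sum_j(\yy_{n+j}\yy_j-\yy_j\yy_{n+j})=0$ in the coefficient of $\partial/\partial\xx_{2n+1}$. Your computation checks out.

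What each route buys: yours is fully self-contained. It needs neither the exponential-coordinate formula nor the fact that left translations carry integral curves of left-invariant fields to integral curves. It also makes explicit that Euclidean $C^1$ regularity of $f$ is exactly what the chain rule requires. The paper's route is shorter and carries over unchanged to any Carnot group written in exponential coordinates, where your coordinate cancellation would have to be redone via higher-order group laws. Your closing remark that $(s\y)(t\y)=((s+t)\y)$ is precisely the bridge between the two arguments.
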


\begin{proof}
We denote by $\mathfrak{h}^n$ the Lie algebra of $\mathbb{H}^n$. Then $X_1,\dots, X_{2n+1}$ is a basis $\mathfrak{h}^n$.
It is well known that (see e.g. \cite[\S 3.1]{BLU}) the  exponential  map $\exp:\mathfrak{h}^n \rightarrow \mathbb{H}^n$ is given by 
\[
\exp\bigg(\sum_{j=1}^{2n+1}\xx_j X_j \bigg) = (\xx_1, \dots,\xx_{2n+1}).
\]
For any point $\y = (\yy_1,\dots, \yy_{2n+1}) \in \mathbb{H}^n$, let $Y:=\exp^{-1}(\y) =\sum_{j=1}^{2n+1}\yy_j X_j\in \mathfrak{h}^n$.
Then define a curve $\gamma: [0,\infty) \rightarrow 
\mathbb{H}^n$ by 
\[
\gamma (t) := \exp(tY) =\exp\bigg(\sum_{j=1}^{2n+1}t\yy_j X_j\bigg) = (t\yy_1,\dots,t\yy_{2n+1}) =t\y.
\]
By the definition of the exponential map ,  $\gamma$
is the integral curve of the left-invariant vector field $Y$ such that 
$\gamma(0)=\mathbf{0}$ and $\gamma(1) =\y$. 
Then by the Fundamental Theorem of Calculus, 
we have
\begin{align*}
 f(\mathbf{x}\mathbf{y}) - f(\mathbf{x}) & =f(\x\gamma(1)) -f(\x \gamma(0)) 
  = \int_0^1 \frac{d}{dt} f(\x \gamma (t)) dt \\
& = \int_0^1 (Yf)(\x \gamma (t)) dt 
 = \int_0^1 \sum_{j=1}^{2n+1}\yy_j(X_jf)(\x \gamma (t)) dt\\
 &= \int_0^1 \sum_{j=1}^{2n+1}\yy_j(X_jf)\big(\x (t\y)\big) dt.
\end{align*}
The proof of the lemma is thus complete.
\end{proof}

\section{Heat semigroup characterization of $\Lambda^\alpha_L(\mathbb{H}^n)$} \label{sec:proof of main}
The main purpose of this section is to prove Theorem \ref{thm:main-1}. Our proof adopts the idea from \cite{DT}.

\subsection{Some auxiliary results} In this subsection we present some auxiliary results that will
be useful in the proof of Theorem \ref{thm:main-1}.

A function $\omega$ is said to a rapidly decaying function on $\mathbb{H}^n$, if for any $N >0$, 
there exists a constant $C_N$ such that for all $\x \in \mathbb{H}^n$, 
\begin{align*}
  |\omega (\x)| \leq C_N (1 +|\x|)^{-N}  
\end{align*}
For any $t >0$, let $\omega_t (\x):= t^{-Q/2}\omega\big(\delta_{t^{-1/2}} (\x) \big)$. 

\begin{lemma} \label{lem:AOI} {\rm (see \cite[Proposition 1.20]{FS})} 
Let $\omega$ be a rapidly decaying function on $\mathbb{H}^n$ and let $f \in L^1(\mathbb{H}^n)$. Then for almost
every $\x  \in \mathbb{H}^n$,
\[
\lim_{t \rightarrow 0} f \ast \omega_t (\x) =f(\x).
\]
\end{lemma}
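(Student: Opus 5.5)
We follow the classical approximate-identity argument, cf. \cite[Proposition 1.20]{FS}, working with the dilation structure $\delta_r$ and the Korányi norm. The plan is to dominate $\omega$ by a radial decreasing majorant, bound $\sup_{t>0}|f\ast\omega_t|$ by the Hardy--Littlewood maximal function, and conclude by density.

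\textbf{Step 1 (majorant).} Fix $N>Q$ and put $\phi(\x):=C_N(1+|\x|)^{-N}$. Then $\phi$ is integrable by polar coordinates on homogeneous groups, and it depends only on $|\x|$ and decreases in $|\x|$. We decompose $\phi$ into annuli $\{2^{j-1}\le |\x|<2^j\}$. The Korányi norm is homogeneous, and $|B(\x,r)|=|B(\mathbf 0,1)|r^Q$. Together these give, for every $t>0$,
\begin{align*}
|f\ast\omega_t(\x)|\le \int_{\mathbb{H}^n}|f(\x\y^{-1})|\,t^{-Q/2}\phi\big(\delta_{t^{-1/2}}\y\big)\,d\mu(\y)\le C\sum_{j\ge 0}2^{-j(N-Q)}\,\frac{1}{|B(\x,2^j\sqrt t)|}\int_{B(\x,2^j\sqrt t)}|f|\,d\mu .
\end{align*}
Here we used $d(\x\y^{-1},\x)=|\y|$, which is valid because $|\y^{-1}|=|\y|$. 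Summing the geometric series yields
\[
\sup_{t>0}|f\ast\omega_t(\x)|\le C\,Mf(\x),
\]
where $M$ is the centered Hardy--Littlewood maximal operator on the space of homogeneous type $(\mathbb{H}^n,d,\mu)$. This operator is of weak type $(1,1)$ by the Vitali covering lemma, since $\mu$ is doubling.

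\textbf{Step 2 (dense class).} Let $g$ be continuous with compact support. Set $c_\omega:=\int\omega\,d\mu$; note that $\int\omega_t=c_\omega$ for all $t$, by the change of variables $\y=\delta_{\sqrt t}\mathbf z$ together with $|\delta_r(E)|=r^Q|E|$. Then
\[
g\ast\omega_t(\x)-c_\omega g(\x)=\int_{\mathbb{H}^n}\big(g(\x(\delta_{\sqrt t}\mathbf z)^{-1})-g(\x)\big)\,\omega(\mathbf z)\,d\mu(\mathbf z).
\]
This integral tends to $0$ for every $\x$ by dominated convergence, using the uniform continuity and boundedness of $g$ and the integrability of $\omega$.

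\textbf{Step 3 (conclusion).} Define $\Omega f(\x):=\limsup_{t\to0}|f\ast\omega_t(\x)-c_\omega f(\x)|$. For a compactly supported continuous $g$, Step 2 shows $\Omega f\le \Omega(f-g)\le C\,M(f-g)+|c_\omega|\,|f-g|$. By the weak $(1,1)$ bound and Chebyshev's inequality,
\[
|\{\Omega f>\lambda\}|\le C\lambda^{-1}\|f-g\|_{L^1}.
\]
The right-hand side can be made arbitrarily small, so $\Omega f=0$ a.e. This gives $\lim_{t\to0}f\ast\omega_t(\x)=c_\omega f(\x)$ for a.e.\ $\x$.

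The statement as written asserts the limit $f(\x)$. This requires the implicit normalization $\int\omega\,d\mu=1$, which we would state as a hypothesis; presumably it holds in the intended application, with $\omega$ the heat kernel $H_1$, since $\int H_1=1$.

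\textbf{Main obstacle.} The only delicate point is Step 1 on the noncommutative group. One must check that the translated balls $\{\x\y^{-1}:|\y|<r\}$ coincide with the balls $B(\x,r)$ of the left-invariant metric, and that the maximal operator has weak type $(1,1)$. Both follow from the symmetry $|\y^{-1}|=|\y|$ and the doubling property of $\mu$.
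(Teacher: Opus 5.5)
Your argument is correct. It is the standard proof (integrable radial decreasing majorant, the bound $\sup_{t>0}|f\ast\omega_t|\le C\,Mf$ with $M$ the Hardy--Littlewood maximal function, then density of $C_c$ in $L^1$), which is what the paper's citation of Folland--Stein stands for, since the paper gives no proof of its own. Your normalization caveat is also right: as written the lemma needs $\int\omega\,d\mu=1$, and in general the limit is $\bigl(\int\omega\,d\mu\bigr)f(\x)$; this corrected form is all that Lemma~\ref{lem:limitation} uses, because there $\omega=H_1$ has integral one, and for the Lin--Liu majorant only a.e.\ finiteness of the limit matters.
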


\begin{lemma}\label{lem:limitation}
Assume that $f$ is a measurable function on $\mathbb{H}^n$ and there exists $t_0 > 0$ such that 
$\int_{\mathbb{H}^n} e^{-\frac{|\x|^2}{t_0}} f(\x) d\mu(\x) < \infty$. 
Then for almost every $\x \in \mathbb{H}^n$, 
\begin{align*}
\lim_{t \to 0} T_t^L f(\x) = f(\x).
\end{align*}
\end{lemma}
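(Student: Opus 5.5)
We compare $T_t^L f$ with the sub-Laplacian heat semigroup $T_t f = f\ast H_t$ and then use Lemma \ref{lem:AOI} for the latter. First, the hypothesis (reading it, as intended, with $|f|$ in place of $f$) gives $\int_{\mathbb{H}^n} e^{-|\x|^2/t}|f(\x)|\,d\mu(\x)<\infty$ for all $t\le t_0$.

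\textbf{Step 1: reduction to a local function.} Since convergence almost everywhere is a local property, it suffices to prove the claim for almost every $\x$ in an arbitrary fixed ball $B(\mathbf{0},R)$. Split $f=f_1+f_2$ with $f_1:=f\chi_{B(\mathbf{0},2R)}\in L^1(\mathbb{H}^n)$. This is integrable because $e^{-|\y|^2/t_0}\ge e^{-4R^2/t_0}$ on $B(\mathbf{0},2R)$.

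For $f_2$ and $|\x|<R$, $|\y|\ge 2R$, the triangle inequality gives $|\y^{-1}\x|\ge |\y|/2$. Lemma \ref{lem:kernel estimatiaon} with $\ell=0$ then gives
\[
|T_t^L f_2(\x)|\lesssim t^{-Q/2}\int_{|\y|\ge 2R} e^{-|\y|^2/(4ct)}|f(\y)|\,d\mu(\y).
\]
For $t\le t_0/(8c)$ we have $e^{-|\y|^2/(4ct)}\le e^{-|\y|^2/(8ct)}\,e^{-|\y|^2/t_0}$, and $e^{-|\y|^2/(8ct)}\le e^{-R^2/(2ct)}$ on the region of integration. Hence $|T_t^L f_2(\x)|\lesssim t^{-Q/2}e^{-R^2/(2ct)}\to0$ uniformly in $|\x|<R$. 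Since $f_2=0$ on $B(\mathbf{0},R)$, this proves the claim for $f_2$.

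\textbf{Step 2: compare $T^L_t$ with $T_t$ on $L^1$.} For $f_1$, write $T_t^L f_1=T_t f_1-(T_t-T_t^L)f_1$. Since $H_t=(H_1)_t$ by homogeneity and $H_1$ is rapidly decaying by Lemma \ref{lem:heat kernel of sub-Laplacian}, Lemma \ref{lem:AOI} gives $T_tf_1\to f_1$ almost everywhere. (The convolution order is harmless: either use $\tilde H_t(\y)=H_t(\y^{-1})=H_t(\y)$, or note that the cited result covers both.)

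It remains to show $(T_t-T_t^L)f_1(\x)\to0$ almost everywhere. By the Duhamel formula, $0\le H_t(\y^{-1}\x)-K_t^L(\x,\y)=\int_0^t\!\int K^L_s(\x,\z)V(\z)H_{t-s}(\y^{-1}\z)\,d\z\,ds$. The standard perturbation estimate (as in Shen, Dziubański--Zienkiewicz, and their Heisenberg analogues) gives
\[
|H_t(\y^{-1}\x)-K_t^L(\x,\y)|\lesssim \Big(\frac{\sqrt t}{\rho(\x)}\Big)^{\delta} t^{-Q/2}e^{-|\y^{-1}\x|^2/(ct)},\qquad \sqrt t\le\rho(\x),
\]
with $\delta=2-Q/q>0$. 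Consequently
\[
|(T_t-T_t^L)f_1(\x)|\lesssim (\sqrt t/\rho(\x))^\delta\, \mathcal{M}f_1(\x),
\]
where $\mathcal{M}$ is the Hardy--Littlewood maximal function on the homogeneous space $\mathbb{H}^n$. Since $\mathcal{M}f_1<\infty$ almost everywhere for $f_1\in L^1$ (weak type $(1,1)$) and $\rho(\x)>0$, this tends to $0$ almost everywhere.

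\textbf{Main obstacle.} The key step is the perturbation bound in Step 2. If citing it is not acceptable, I would prove it from Duhamel's formula by splitting the $s$-integral at $t/2$ and using the Gaussian bounds from Lemma \ref{lem:kernel estimatiaon} and Lemma \ref{lem:heat kernel of sub-Laplacian}. The $V$-integral is then controlled by the estimate $\int_{B(\x,r)}V\lesssim r^{Q-2}(r/\rho(\x))^{2-Q/q}$ for $r\le\rho(\x)$, which follows from \eqref{eq:RHC} and the definition of $\rho$.

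A cruder alternative avoids this estimate. Truncate $f_1$ to $f_1\chi_{\{|f_1|\le M\}}$, note the remainder has small $L^1$ norm, and show that $T_t^L$ and $T_t$ are each of weak type $(1,1)$ uniformly via the maximal operators $\sup_t|T_t^Lg|\lesssim\mathcal{M}g$. Convergence for continuous compactly supported $g$ follows from $\|T_t^Lg-g\|_\infty\to0$, which is again a local kernel argument. This density-plus-maximal-function route is what I would try first if the perturbation estimate proves delicate.
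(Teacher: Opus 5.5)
Your proposal is correct and follows essentially the paper's route: the paper likewise compares $K_t^L$ with $H_t$ through the perturbation bound $|K_t^L(\x,\y)-H_t(\y^{-1}\x)|\lesssim (\sqrt t/\rho(\x))^{2-Q/q}\,\omega_t(\y^{-1}\x)$ for $\sqrt t\le\rho(\x)$ (cited from Lin--Liu, Lemma 9) and then applies Lemma \ref{lem:AOI}, leaving the localization you carry out in Step 1 to the analogous Euclidean argument of De Le\'on-Contreras--Torrea. The only difference is that the cited bound has a rapidly decaying $\omega$ instead of your Gaussian; this is still enough for your domination by $\mathcal{M}f_1$, because such an $\omega$ has an integrable radially decreasing majorant.
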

 
\begin{proof}
By \cite[Lemma 9]{LinLiu}, there exists a rapid decay function $\omega$ such that 
\begin{align*}
|K_t^L (\x, \y) - H_t(\y^{-1}\x)| \leq C \left( \frac{\sqrt{t}}{\rho(\x)}\right)^{2-\frac{Q}{q}} \omega_t (\y^{-1}\x), \quad
\text{for } \sqrt{t} \leq \rho(\x).
\end{align*}
Using this estimate and Lemma~\ref{lem:AOI}, and arguing similarly as in the proof of \cite[Lemma 2.3]{DT},
one can prove the assertion. 
\end{proof}

\begin{lemma} \label{lem:5.2}
Let $\alpha >0$.
If $\rho(\cdot)^{-\alpha}f \in L^\infty (\mathbb{H}^n)$, 
then   $f$ satisfies the heat size condition for $L$.  
\end{lemma}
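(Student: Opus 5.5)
We verify the two requirements in the definition of $\mathcal{F}_L(\mathbb{H}^n)$ separately. Throughout, write $|f(\x)| \le M\rho(\x)^{\alpha}$ for a.e. $\x$, where $M := \|\rho(\cdot)^{-\alpha}f\|_{L^\infty}$.

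\emph{Integrability.} By Remark~\ref{rmk:growth of rho}, $\rho(\x)\le C(1+|\x|)^{k_0/(k_0+1)}$. Hence
\[
|f(\x)| \le CM(1+|\x|)^{\alpha}.
\]
For every $t>0$ we then have
\[
\int_{\mathbb{H}^n} e^{-|\x|^2/t}|f(\x)|\,d\mu(\x) \le CM\int_{\mathbb{H}^n} e^{-|\x|^2/t}(1+|\x|)^{\alpha}\,d\mu(\x)<\infty .
\]
The last integral is finite because $e^{-|\x|^2/t}(1+|\x|)^{\alpha}\lesssim_{t}(1+|\x|)^{-Q-1}$, and the polar-coordinates integrability criterion stated in the preliminaries applies.

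\emph{Decay of $\partial_t^\ell T_t^L f$.} Fix $\ell\in\mathbb{N}_0$ and $\x$. By Lemma~\ref{lem:kernel estimatiaon}, the first property just proved, and dominated convergence, we may differentiate under the integral, so $\partial_t^\ell T_t^Lf(\x)=\int \partial_t^\ell K_t^L(\x,\y)f(\y)\,d\mu(\y)$. Then
\[
|\partial_t^\ell T_t^L f(\x)| \le CM\, t^{-\ell-Q/2}\int_{\mathbb{H}^n} e^{-|\y^{-1}\x|^2/(ct)}\Big(1+\tfrac{\sqrt t}{\rho(\x)}\Big)^{-N}\rho(\y)^{\alpha}\,d\mu(\y).
\]
The key step is to control $\rho(\y)$ by $\rho(\x)$ via Lemma~\ref{lem:property of rho}:
\[
\rho(\y)^{\alpha}\le C\rho(\x)^{\alpha}\Big(1+\tfrac{|\y^{-1}\x|}{\rho(\x)}\Big)^{\alpha k_0/(k_0+1)}\le C\rho(\x)^{\alpha}\Big(1+\tfrac{\sqrt t}{\rho(\x)}\Big)^{\alpha}\Big(1+\tfrac{|\y^{-1}\x|}{\sqrt t}\Big)^{\alpha}.
\]
The polynomial factor $(1+|\y^{-1}\x|/\sqrt t)^{\alpha}$ is absorbed by the Gaussian after reducing $c$. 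The change of variables $\y\mapsto \x\,\delta_{\sqrt t}(\mathbf z)$ then shows that $t^{-Q/2}\int e^{-|\y^{-1}\x|^2/(c't)}\,d\mu(\y)$ is a constant. This gives
\[
|\partial_t^\ell T_t^L f(\x)|\le C M\,\rho(\x)^{\alpha}\, t^{-\ell}\Big(1+\tfrac{\sqrt t}{\rho(\x)}\Big)^{\alpha-N}.
\]
Choose $N>\alpha$. Since $\rho(\x)<\infty$ by Lemma~\ref{lem:rho finite}, the right-hand side is $\lesssim \rho(\x)^{N}t^{-\ell-(N-\alpha)/2}\to0$ as $t\to\infty$. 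This holds even for $\ell=0$, which is exactly where the factor $N$ is needed.

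The main (mild) obstacle is this last step: the growth of $\rho(\y)^\alpha$ relative to the point $\x$ must be handled so that the decay factor $(1+\sqrt t/\rho(\x))^{-N}$ from the kernel bound survives. Lemma~\ref{lem:property of rho} provides exactly that. The only remaining care is justifying differentiation under the integral sign, which follows from the same Gaussian bounds applied locally uniformly in $t$.
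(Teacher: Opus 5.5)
Your proof is correct. The integrability half matches the paper's; the decay half takes a different route.

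The paper never compares $\rho(\y)$ with $\rho(\x)$. It uses only the global growth bound $|f(\y)|\lesssim(1+|\y|)^{\alpha}$ from Remark~\ref{rmk:growth of rho}. It takes $N=3+\alpha$ in Lemma~\ref{lem:kernel estimatiaon} and bounds the decay factor by $(\sqrt t/\rho(\x))^{-(3+\alpha)}$, and it replaces the Gaussian by $(1+|\y^{-1}\x|/\sqrt t)^{-(Q+1+\alpha)}$. It then uses the triangle inequality to move the center of this weight from $\x$ to the origin, at the cost of the factors $(1+\sqrt t)^{Q+1+\alpha}(1+|\x|/\sqrt t)^{Q+1+\alpha}$. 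What remains is the same weighted integral $\int|f(\y)|(1+|\y|)^{-(Q+1+\alpha)}\,d\mu(\y)$ already controlled in the first part. Restricting to $t>\max\{1,|\x|^2\}$ then gives $|\partial_t^\ell T_t^Lf(\x)|\lesssim t^{-\ell-1}\rho(\x)^{3+\alpha}$.

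You instead use the two-point comparison of Lemma~\ref{lem:property of rho} for arbitrary $\x,\y$. This keeps the Gaussian centered at $\x$ and yields the scale-invariant bound $|\partial_t^\ell T_t^Lf(\x)|\lesssim M\rho(\x)^{\alpha}t^{-\ell}(1+\sqrt t/\rho(\x))^{\alpha-N}$. That bound holds for all $t>0$, does not depend on $|\x|$, and depends linearly on $M=\|\rho(\cdot)^{-\alpha}f\|_{L^\infty}$.

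So your argument is cleaner and quantitatively stronger; it is the same kind of weighted estimate the paper later proves in Lemma~\ref{lem:mixed weighted estimates}. The paper's argument is cruder, but it uses only the single consequence $(1+|\cdot|)^{-\alpha}f\in L^\infty$ of the hypothesis in both halves. Both arguments ultimately rely on $\rho(\x)<\infty$ from Lemma~\ref{lem:rho finite}.
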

 
\begin{proof}
From Remark \ref{rmk:growth of rho} we see that  $\rho(\cdot)^{-\alpha}f \in L^\infty (\mathbb{H}^n)$
implies $(1 + |\cdot|)^{-\alpha} f \in L^\infty (\mathbb{H}^n)$.
Hence, for every  $t >0$,
\begin{align*}
\int_{\mathbb{H}^n} e^{-\frac{|\x|^2}{t}} |f(\x)|d\mu(\x) &\leq C \int_{\mathbb{H}^n}\Big(1 + \frac{|\x|}{\sqrt{t}}\Big)^{-(Q+1 +\alpha) }|f(\x)|d\mu(\x) \\
&\leq C (1 +\sqrt{t})^{Q+1 +\alpha}  \int_{\mathbb{H}^n} (1 + |x|)^{-(Q+1 +\alpha)} |f(\x)| d\mu(\x)  \\
& \leq C (1 +\sqrt{t})^{Q+1 +\alpha}  \int_{\mathbb{H}^n} (1 + |x|)^{-(Q+1)} d\mu(\x)  \\
& <\infty,
\end{align*}
where we used the elementary inequalities
\[
e^{-s} \leq C_N (1 + s)^{-N}, \quad s>0
\]
and 
\[
\Big(1 + \frac{a}{b}\Big)^{-1}
\leq (1 +b) (1 +a)^{-1},\quad a,b >0.
\]

To show that $\lim_{t \rightarrow \infty} \partial_t^\ell T_t^L f(\x) =0$ for every $\ell \in \mathbb{N}_0$ and every $\x \in \mathbb{H}^n$, we write
\begin{align} \label{eq:tLell}
\partial_t^\ell T_t^L f (\x) = \int_{\mathbb{H}^n} \partial_t^\ell K^L_t (\x,\y)f(\y)d\mu(\y) .
\end{align}
By  Lemma \ref{lem:kernel estimatiaon},   there exist constants $C, c >0$ such that
\begin{align*}
|\partial_t^\ell K_t^L (x,y)|& \leq C t^{-\ell -\frac{Q}{2}} e^{-\frac{|\y^{-1}\x|^2}{ct}} \left(1+  \frac{\sqrt{t}}{\rho(\x)} +  \frac{\sqrt{t}}{\rho(\y)}\right)^{-(3+\alpha)} \\
& \leq C t^{-\ell -\frac{Q }{2}}\left( 1 +\frac{|\y^{-1}\x|}{\sqrt{t}}\right)^{-(Q +1+\alpha)}\left( \frac{\sqrt{t}}{\rho(\x)}\right)^{-(3+\alpha)}. 
\end{align*}
Using the triangle inequality \eqref{eq:triangle}, we have
\begin{align*}
1 + \frac{|\y|}{\sqrt{t}} \leq 1 +\frac{|\y^{-1}\x|}{\sqrt{t}} +\frac{\x}{\sqrt{t}} \leq \left(1 +\frac{|\y^{-1}\x|}{\sqrt{t}}\right)  \left(1 +\frac{|\x|}{\sqrt{t}}\right), 
\end{align*}
and hence 
\begin{align*}
\left( 1 +\frac{|\y^{-1}\x|}{\sqrt{t}}\right)^{-(Q +1+\alpha)} &\leq \left( 1 +\frac{|\y|}{\sqrt{t}}\right)^{-(Q +1+\alpha)}\left( 1 +\frac{|\x|}{\sqrt{t}}\right)^{Q +1+\alpha}\\
 & \leq (1 +\sqrt{t})^{Q + 1+\alpha} ( 1 + |\y| )^{-(Q +1 +\alpha)}\left( 1 +\frac{|\x|}{\sqrt{t}}\right)^{Q +1 +\alpha}.
\end{align*}
It follows that
\begin{align*}
|\partial_t^\ell K_t^L (\x,\y)|\leq   C t^{-\ell -\frac{Q }{2}}(1 +\sqrt{t})^{Q +1 +\alpha} 
( 1 + |\y| )^{-(Q +1+\alpha)}\left( 1 +\frac{|\x|}{\sqrt{t}}\right)^{Q +1+\alpha}\left( \frac{\sqrt{t}}{\rho(\x)}\right)^{-(3+\alpha)}.
\end{align*}
Inserting this into \eqref{eq:tLell}, and using that $(1+|\cdot|)^{-\alpha}f \in L^\infty(\mathbb{H}^n)$, we obtain
\begin{align} \label{eq:rewr}
   | \partial_t^\ell T_t^L f (\x) | &\leq C  t^{-\ell - \frac{Q }{2}} (1 +\sqrt{t})^{Q +1 +\alpha} \left( \frac{\sqrt{t}}{\rho(\x)}\right)^{-(3+\alpha)}  \left( 1 + \frac{|\x|}{\sqrt{t}}\right)^{Q +1 +\alpha}\int_{\mathbb{H}^n} 
      \frac{|f(\y)|}{(1 +|y|)^{Q+1 +\alpha} }d\mu(\y) \nonumber \\
     & \leq C   t^{-\ell - \frac{Q}{2}} (1 +\sqrt{t})^{Q +1 +\alpha} \left( \frac{\sqrt{t}}{\rho(\x)}\right)^{-(3+\alpha)}  \left( 1 + \frac{|\x|}{\sqrt{t}}\right)^{Q +1+\alpha}.
\end{align}

If $t > \max \{1, |\x|^2\}$, then $1 +\sqrt{t} \leq 2\sqrt{t}$ and  $ 1 + \frac{|\x|}{\sqrt{t}} \leq 2$. Hence from 
\eqref{eq:rewr} we see that for
$t >\max \{1, |\x|^2\}$
\begin{align*}
 | \partial_t^\ell T_t^L f (\x) | \leq C t^{-\ell} t^{\frac{1+\alpha}{2}} t^{-\frac{3+\alpha}{2}} \rho(\x)^{-(2+\alpha)}
 =t^{-\ell -1}\rho(\x)^{-(3+\alpha)}.
\end{align*}
Since  $\rho(x) < \infty$ (cf.  Lemma \ref{lem:rho finite}), it follows that 
$\lim_{t \rightarrow \infty} \partial_t^\ell T_t^L f (\x) =0$.
\end{proof}

\begin{lemma}\label{lem:mixed weighted estimates}
Let $\alpha > 0$, $k :=\lfloor \frac{\alpha}{2}\rfloor +1$ and
$f \in \Gamma_L^{\alpha/2} (\mathbb{H}^n)$. 
Then for any $\ell, m \in \mathbb{N}_0$  with $\frac{m}{2} + \ell \ge k$,  there exists a constant $C_{m,\ell} > 0$
such that
\begin{align*}
\big\|\rho(\cdot)^{-m} \partial_t^\ell T_t^L f \big\|_{L^\infty(\mathbb{H}^n)} \le C_{m,\ell}  t^{-(\frac{m}{2} + \ell) + \frac{\alpha}{2}} \|f\|_{\Gamma_L^{\alpha/2} (\mathbb{H}^n)}.
\end{align*}
\end{lemma}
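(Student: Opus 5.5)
We first reduce to the case $\ell\ge k$. If $\ell\ge k$, the semigroup property gives $\partial_t^\ell T_t^L f = \partial_s^{\ell-k}T^L_{s}\big|_{s=t/2}\,\partial_u^k T_u^L f\big|_{u=t/2}$. Since $\ell\geq k$, the operator $\partial_s^{\ell-k}T_s^L$ at $s=t/2$ involves no weight yet. We then estimate
\[
|\rho(\x)^{-m}\partial_t^\ell T_t^Lf(\x)|\le \rho(\x)^{-m}\int_{\mathbb{H}^n}|\partial_s^{\ell-k}K^L_{t/2}(\x,\y)|\,d\mu(\y)\,\big\|\partial_u^kT^L_{u}f\big\|_{L^\infty}\Big|_{u=t/2}.
\]
Lemma~\ref{lem:kernel estimatiaon} with $N=m$ gives $\int|\partial_s^{\ell-k}K^L_{t/2}(\x,\y)|\,d\mu(\y)\lesssim t^{-(\ell-k)}(1+\sqrt t/\rho(\x))^{-m}$. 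Since $\rho(\x)^{-m}(1+\sqrt t/\rho(\x))^{-m}\le t^{-m/2}$, and the $\Gamma$-norm bounds $\|\partial_u^kT_u^Lf\|_\infty\le (t/2)^{-k+\alpha/2}\|f\|_{\Gamma_L^{\alpha/2}}$, we obtain the claimed bound $t^{-(m/2+\ell)+\alpha/2}$. This case uses only the kernel bound with the $\rho(\x)$ factor.

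The remaining case is $\ell<k$ with $m/2+\ell\ge k$, so $m\ge 2(k-\ell)\ge 2$. Here we cannot split off $k$ derivatives, and we would instead integrate in time from infinity. By the heat size condition ($f\in\mathcal F_L$), $\partial_s^{\ell}T_s^Lf(\x)\to0$ as $s\to\infty$ pointwise. Hence
\[
\partial_t^\ell T^L_tf(\x)=(-1)^{k-\ell}\int_t^\infty\!\!\int_{s_1}^\infty\!\!\cdots\,\partial^{k}_{s}T^L_{s}f(\x)\,ds\cdots,
\]
that is, $k-\ell$ iterated integrals of $\partial_s^kT_s^Lf$. Applying the first case to $\partial_s^kT_s^Lf$ with the full weight $\rho^{-m}$ (legitimate since $k\ge k$) gives $|\rho(\x)^{-m}\partial_s^kT_s^Lf(\x)|\lesssim s^{-(m/2+k)+\alpha/2}\|f\|_{\Gamma_L^{\alpha/2}}$. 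The exponent satisfies $m/2+k-\alpha/2> k-\alpha/2 +(k-\ell)\ge (k-\ell)+ (k-\alpha/2)$. Because $k>\alpha/2$, each successive integral $\int_{s}^\infty$ converges: after $j$ integrations the power is $-(m/2+k-j)+\alpha/2$. For $j\le k-\ell$ this is $\le -(m/2+\ell)+\alpha/2 \le -k+\alpha/2<0$, which stays below $0$ before each integration. After $k-\ell$ integrations we get exactly $t^{-(m/2+\ell)+\alpha/2}$.

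We expect the main obstacle to be justifying the integration from infinity pointwise. This requires differentiating under the integral and knowing $\partial_s^jT_s^Lf(\x)\to0$ for each intermediate $j$, not just $j=\ell$. Both are supplied by the heat size condition, which covers all $\ell\in\mathbb N_0$, together with smoothness in $t$ from Lemma~\ref{lem:kernel estimatiaon}. A second point is that the first case must be verified for general $m$, including odd $m$; the choice $N=m$ handles this uniformly. Finally, the $L^\infty$ statements hold pointwise for each $\x$, since the kernels make $\partial_t^jT_t^Lf$ continuous in $\x$.
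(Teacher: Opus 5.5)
Your proposal is correct and follows the paper's proof essentially step by step: for $\ell\ge k$ you split the semigroup at $t/2$ and use the kernel bound of Lemma~\ref{lem:kernel estimatiaon} with $N=m$, and for $\ell<k$ you integrate the weighted bound for $\partial_s^kT_s^Lf$ from infinity $k-\ell$ times using the heat size condition. The one slip is cosmetic: in your exponent chain the first inequality should be $\ge$ (the case $m/2=k-\ell$ is allowed), but each integral only needs $m/2+j>\alpha/2$ for $\ell\le j\le k-1$, which still holds.
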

\begin{proof}
First we consider the case $ \ell\ge k$, by the semigroup property, we have
\begin{equation} \label{eq:semigroup property}
\begin{split}
\partial_t^\ell T_t^L f & = (-L)^\ell  T_t f = (-L)^\ell  T_{t/2} T_{t/2} f =  (-L)^{\ell-k} T_{t/2} \big( (-L)^{k} T_{t/2}f\big)\\
& =2^k (-L)^{\ell-k} T_{t/2} \big(\partial_t^k T_{t/2}f\big) =2^\ell \partial_t^{\ell -k}T_{t/2} \big(\partial_t^k T_{t/2}f\big).
\end{split}
\end{equation}
Hence
\begin{align} \label{eq:aa}
    \left| \rho(\x)^{-m}\partial_t^\ell T_t^L f(\x) \right| \leq 2^\ell \rho(\x)^{-m} \int_{\mathbb{H}^n} \left| \partial_t^{\ell -k}  K_{t/2}^L(\x,\y) \right| 
    \left|\partial_t^k T_{t/2}^L f(\y) \right| d\mu(\y) .
    \end{align}
Note that 
\begin{align} \label{eq:bb}
\left|\partial_t^k T_{t/2}^L f(\y) \right|    = 2^{-k}\left|\partial_u^k T_{u}^L f(\y)\big|_{u =t/2} \right|
\leq 2^{-k} \|f\|_{\Gamma^{\alpha/2}_L(\mathbb{H}^n)} (t/2)^{-k +\frac{\alpha}{2}}.
\end{align}
On the other hand, by Lemma \ref{lem:kernel estimatiaon}, 
\begin{align} \label{eq:cc}
  \left| \partial_t^{\ell -k}  K_{t/2}^L(\x,\y) \right|   & = 2^{-(\ell -k)} \left| \partial_u^{\ell -k}  K_{u}^L(\x,\y)\big|_{u =t/2} \right|  \nonumber \\
  & \leq C_m 2^{-(\ell -k)}(t/2)^{-\frac{Q}{2} - \ell + k}\exp \left(-\frac{|\y^{-1}\x|^2}{ct/2} \right) \left(1 + \frac{\sqrt{t/2}}{\rho(\x)}  + \frac{\sqrt{t/2}}{\rho(\y)}\right)^{-m} \nonumber \\
  & \leq C_{m,\ell} 2^{-(\ell -k)} t^{-\frac{Q}{2} - \ell + k}\exp \left(-\frac{|\y^{-1}\x|^2}{ct/2} \right) \left(\frac{\rho(\x)}{\sqrt{t}} \right)^{m}.
\end{align}
Inserting \eqref{eq:bb} and \eqref{eq:cc} into \eqref{eq:aa}, it follows that
\begin{align*}
    \left| \rho(\x)^{-m}\partial_t^\ell T_t^L f(\x) \right| &\leq C_{m,\ell} t^{-k +\frac{\alpha}{2}}t^{ - \ell + k}t^{-\frac{m}{2}}\|f\|_{\Gamma^{\alpha/2}_L(\mathbb{H}^n)} \int_{\mathbb{H}^n}t^{-\frac{Q}{2}}\exp \left(-\frac{|\y^{-1}\x|^2}{ct/2} \right) d\mu(\y)\\
    & \leq C_{m,\ell}  t^{ -(\ell +\frac{m}{2})+\frac{\alpha}{2}}  \|f\|_{\Gamma^{\alpha/2}_L(\mathbb{H}^n)} ,
    \end{align*}
as desired.

Next we consider the case $\ell < k$. By the result proved in the previous case, we have
\begin{align} \label{eq:for k}
  \left| \rho(\x)^{-m}\partial_t^k T_t^L f(\x) \right| \leq C  t^{ -(k +\frac{m}{2})+\frac{\alpha}{2}}  \|f\|_{\Gamma^{\alpha/2}_L(\mathbb{H}^n)} .
\end{align}
Since $f \in \Gamma_L^{\alpha/2} (\mathbb{H}^n)$,  
$f$ satisfies the heat size condition for $L$. Hence for every $j \in \mathbb{N}_0$,
\[
\lim_{u \rightarrow \infty} \partial_u^j T_u^L f(\x) =0.
\]
Now using the Newton-Leibniz formula and \eqref{eq:for k}, we have
\begin{align*}
  \left| \rho(\x)^{-m}\partial_t^{k -1} T_t^L f(\x) \right|& = \left| \lim_{u \rightarrow \infty} \partial_u^j T_u^L f(\x)
  - \int_t^{\infty} \rho(\x)^{-m}\partial_s^{k} T_s^L f(\x)  ds \right|  \\
  & \leq C \int_t^{\infty}   s^{ -(k +\frac{m}{2})+\frac{\alpha}{2}}    ds \|f\|_{\Gamma^{\alpha/2}_L(\mathbb{H}^n)} \\
  &\leq C   t^{ -(k -1 +\frac{m}{2})+\frac{\alpha}{2}}  \|f\|_{\Gamma^{\alpha/2}_L(\mathbb{H}^n)}.
\end{align*}
 Repeating this process $k -\ell$ times, we get
 \begin{align*}
  \left| \rho(\x)^{-m}\partial_t^{\ell} T_t^L f(\x) \right|
  \leq C   t^{ -(\ell +\frac{m}{2})+\frac{\alpha}{2}}  \|f\|_{\Gamma^{\alpha/2}_L(\mathbb{H}^n)},
\end{align*}
as desired.
\end{proof}

Using Lemma \ref{lem:mixed weighted estimates} and arguing similarly as the proof of \cite[Proposition 2.9]{DT},
we may derive the following result.
\begin{proposition} \label{growth control of heat Lipschitz}
Let $\alpha >0$. If $f \in \Gamma_L^\alpha (\mathbb{H}^n)$, then 
$\rho(\cdot)^{-\alpha }f \in L^\infty (\mathbb{H}^n)$. Moreover, there exists
 a constant $C$ such that for all $f \in \mathcal{F}_L(\mathbb{H}^n)$,
\begin{align*}
 \|\rho(\cdot)^{-\alpha }f \|_{L^\infty (\mathbb{H}^n)}  \leq C \|f\|_{\Gamma_L^{\alpha/2} (\mathbb{H}^n)}.
\end{align*}
\end{proposition}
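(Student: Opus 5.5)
The plan is to split $f$ via the heat semigroup at the scale $t=\rho(\x)^2$, following the proof of \cite[Proposition 2.9]{DT}. Fix $\x\in\mathbb{H}^n$. By Lemma \ref{lem:limitation} (its hypothesis is part of $f\in\mathcal{F}_L(\mathbb{H}^n)$), for almost every $\x$ we have $f(\x)=\lim_{s\to0}T_s^Lf(\x)$. Let $k=\lfloor\alpha/2\rfloor+1$. The aim is to express $T_s^Lf(\x)$ as an iterated time integral of $\partial^k T^L f$, with boundary terms evaluated at $t_0:=\rho(\x)^2$.

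Step 1: the Taylor expansion in time. For $0<s<t_0$, Taylor's formula in the variable $s$ around $t_0$ gives
\[
T_s^Lf(\x)=\sum_{j=0}^{k-1}\frac{(s-t_0)^j}{j!}\,\partial_t^jT_t^Lf(\x)\Big|_{t=t_0}+\frac{1}{(k-1)!}\int_{t_0}^{s}(s-u)^{k-1}\partial_u^kT_u^Lf(\x)\,du .
\]

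Step 2: the boundary terms. Apply Lemma \ref{lem:mixed weighted estimates} with $m\ge0$ chosen so that $m/2+j\ge k$, say $m=2(k-j)$. This gives
\[
|\partial_t^jT_t^Lf(\x)|\le C\rho(\x)^{m}t^{-(m/2+j)+\alpha/2}\|f\|_{\Gamma_L^{\alpha/2}} .
\]
At $t=t_0=\rho(\x)^2$ this equals $C\rho(\x)^{\alpha-2j}\|f\|_{\Gamma_L^{\alpha/2}}$. Multiplying by $|s-t_0|^j\le t_0^j=\rho(\x)^{2j}$ yields the bound $C\rho(\x)^\alpha\|f\|$ for each boundary term.

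Step 3: the remainder. Bound it using only the definition of the $\Gamma$-norm, $|\partial_u^kT_u^Lf(\x)|\le u^{-k+\alpha/2}\|f\|$. For $u\in(s,t_0)$ we have $|s-u|^{k-1}\le u^{k-1}$, so the remainder is at most
\[
C\int_0^{t_0}u^{-1+\alpha/2}\,du\,\|f\|=C\rho(\x)^{\alpha}\|f\|,
\]
and this integral converges because $\alpha>0$. This bound is uniform in $s$, so letting $s\to0$ gives $|f(\x)|\le C\rho(\x)^\alpha\|f\|_{\Gamma_L^{\alpha/2}}$ almost everywhere, which is the claimed inequality.

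The only delicate point is the choice of the splitting scale $t_0=\rho(\x)^2$. This choice is what makes the boundary terms of Step 2 give exactly $\rho^\alpha$, and it is the reason Lemma \ref{lem:mixed weighted estimates} is needed: it supplies the extra $\rho$-weight that compensates for $j<k$. Note that Lemma \ref{lem:mixed weighted estimates} is applied with $m$ even, and when $j=k$ one may take $m=0$. I expect no further obstacle. The statement's "$f\in\Gamma_L^\alpha$" is to be read as $f\in\Gamma_L^{\alpha/2}$.
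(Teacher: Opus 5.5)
Your proof is correct and follows essentially the route the paper intends by deferring to \cite[Proposition 2.9]{DT}: split the time variable at $t=\rho(\x)^2$, bound the small-time remainder directly by the $\Gamma_L^{\alpha/2}$-norm, bound the terms at scale $\rho(\x)^2$ by Lemma~\ref{lem:mixed weighted estimates} with $m/2+j=k$, and then let $s\to0$ using Lemma~\ref{lem:limitation}. One point is worth stating explicitly: you evaluate these bounds at the $\x$-dependent time $t=\rho(\x)^2$, so you need them for every $\x$ and not merely almost everywhere for each fixed $t$, and this is what the kernel-integral proof of Lemma~\ref{lem:mixed weighted estimates} actually gives.
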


\begin{lemma} \label{lem:sdsds}
Let $f$ be a measurable function on $\mathbb{H}^n$ such that, for every $t >0$,
$\int_{\mathbb{H}^n} e^{-\frac{|\x|^2}{t}}f(\x)d\x <\infty$. Then, for almost every $\x \in \mathbb{H}^n$, we have
\begin{align} \label{eq:aeconverge}
\lim_{t \rightarrow 0}T_tf(\x) =f(\x).
\end{align}
Moreover, the function $(t,\x) \mapsto T_tf (\x)$ belongs to $C^\infty ((0,\infty) \times \mathbb{H}^n)$.
\end{lemma}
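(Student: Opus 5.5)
The plan is to write $T_tf(\x)=f\ast H_t(\x)=\int_{\mathbb{H}^n}f(\y)H_t(\y^{-1}\x)\,d\mu(\y)$ and use the Gaussian bounds of Lemma~\ref{lem:heat kernel of sub-Laplacian} throughout. This representation drives both parts of the statement: almost-everywhere convergence as $t\to0$, and smoothness in $(t,\x)$.

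\textbf{Convergence.} First I localize. Fix $R>0$ and split $f=f\chi_{B(\mathbf{0},2R)}+f\chi_{B(\mathbf{0},2R)^c}=:f_1+f_2$.
\begin{itemize}
\item The hypothesis gives $e^{-|\x|^2/t}|f|\in L^1$ for every $t>0$ (I read it with $|f|$, as in the heat size condition). Hence $f_1\in L^1(\mathbb{H}^n)$, since $e^{-|\x|^2}$ is bounded below on the ball.
\item $H_1$ is rapidly decaying by Lemma~\ref{lem:heat kernel of sub-Laplacian}, and $H_t=(H_1)_t$ by homogeneity.
\item Lemma~\ref{lem:AOI} then gives $T_tf_1(\x)\to f_1(\x)=f(\x)$ for a.e. $\x\in B(\mathbf{0},R)$.
\end{itemize}
For the tail, take $|\x|<R$ and $|\y|\ge 2R$. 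The triangle inequality gives $|\y^{-1}\x|\ge|\y|-|\x|\ge|\y|/2$. Therefore
\[
|T_tf_2(\x)|\le Ct^{-Q/2}\int_{|\y|\ge2R}e^{-\frac{|\y|^2}{4ct}}|f(\y)|\,d\mu(\y).
\]
For $t\le t_1$ small, I write $e^{-|\y|^2/(4ct)}=e^{-|\y|^2/(8ct)}e^{-|\y|^2/(8ct)}$. The second factor is bounded by $e^{-|\y|^2/(8ct_1)}$, which is integrable against $|f|$ by hypothesis. The first factor is at most $e^{-R^2/(2ct)}$, which beats $t^{-Q/2}$. So $T_tf_2(\x)\to0$ uniformly on $B(\mathbf{0},R)$. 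Letting $R$ run through $\mathbb{N}$ gives \eqref{eq:aeconverge} almost everywhere.

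\textbf{Smoothness.} I differentiate under the integral sign. By Lemma~\ref{lem:heat kernel of sub-Laplacian}, the derivatives $\partial_t^\ell X_{i_1}\cdots X_{i_m}$ of $H_t$ are bounded by $Ct^{-\ell-(Q+m')/2}e^{-|\z|^2/(ct)}$, where $m'$ is the total homogeneous degree. The left-invariant fields $X_j$ act on $\x\mapsto H_t(\y^{-1}\x)$ because convolution on the right commutes with left translations. Since $X_1,\dots,X_{2n+1}$ span the tangent space at every point, bounds for all $X$-monomials control all Euclidean partial derivatives locally.

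Restrict to $(t,\x)$ in a compact set $[a,b]\times\overline{B(\mathbf{0},R)}$ with $a>0$. There $e^{-|\y^{-1}\x|^2/(ct)}\le C_Re^{-|\y|^2/(2cb)}e^{C'R^2/a}$, using the elementary bound $|\y^{-1}\x|^2\ge|\y|^2/2-|\x|^2$. This gives a dominating function $Ce^{-|\y|^2/(2cb)}|f(\y)|$, independent of $(t,\x)$ and integrable by hypothesis. Dominated convergence then yields continuity of every mixed derivative. Hence $T_tf(\x)\in C^\infty((0,\infty)\times\mathbb{H}^n)$.

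\textbf{Main obstacle.} The delicate points are twofold. One is the domination step, which needs the Gaussian decay of the kernel to be traded against the integrability hypothesis, uniformly on compacts. The other is justifying that $H_t$ itself is jointly smooth, which holds by hypoellipticity of $\partial_t-\Delta_{\mathbb{H}^n}$. Both are routine given Lemma~\ref{lem:heat kernel of sub-Laplacian}.
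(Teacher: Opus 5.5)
Your proposal is correct and takes essentially the paper's approach: the smoothness part is the same dominated-convergence argument based on the Gaussian bounds of Lemma~\ref{lem:heat kernel of sub-Laplacian}, and your inequality $|\y^{-1}\x|^2\ge|\y|^2/2-|\x|^2$ plays the role of the paper's case split $|\y|\gtrless 2|\x|$. For the a.e.\ convergence the paper just invokes Lemma~\ref{lem:limitation} with $V\equiv 0$, while you write out the underlying approximate-identity argument yourself (localization, a Gaussian tail bound, and Lemma~\ref{lem:AOI} with $\omega=H_1$, where $\int H_1=1$ is implicitly needed), so your version is a self-contained form of the same idea.
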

\begin{proof}
Assertion \eqref{eq:aeconverge} is special case of Lemma \ref{lem:limitation} (with $V\equiv 0$).  
Let us prove the second assertion.  Fix an arbitrary $t_0 >0$ and an arbitrary $\x_0 \in \mathbb{H}^n$,
and let $U$ be a (small) neighborhood of $(t_0,\x_0)$ in $(0,\infty) \times \mathbb{H}^n$. It suffices to
to show that the function 
\[
(t,\x) \mapsto T_t f(\x) = \int_{\mathbb{H}^n} H_t (\y^{-1}\x) f(\y)d\mu(\y)
\]
is smooth in $U$. By the Lebesgue dominated convergence theorem, it suffices to show that for any $\ell \in \mathbb{N}_0$,
any $m \in \mathbb{N}_0$, and any $(i_1,\dots, i_m) \in \{1,\dots, 2n+1\}^m$, there exists a constant $C = C_{f,\ell, m, U}$,
such that for all $(t, \x) \in U$,
\begin{align*}
\int_{\mathbb{H}^n } \left| (\partial_t^\ell X_{i_1} \cdots X_{i_m} H_t)(\y^{-1}\x) f(\y) \right| d\mu(\y) 
\leq C.
\end{align*}

By Lemma \ref{lem:heat kernel of sub-Laplacian}, we have
\begin{align*}
\big|(\partial_t^\ell X_{i_1}\cdots X_{i_m} H_t) (\y^{-1}\x)\big| \leq C_{\ell,m} t^{-\ell -\frac{Q + \sigma (i_1) +\cdots +\sigma (i_m)}{2}}e^{-\frac{|\y^{-1}\x|^2}{ct}}.
\end{align*}
It follows that
\begin{equation} \label{eq:wwww}
\begin{split}
&\int_{\mathbb{H}^n } \left| (\partial_t^\ell X_{i_1} \cdots X_{i_m} H_t)(\y^{-1}\x) f(\y) \right| d\mu(\y) \\
 &\quad\quad\quad\quad\quad  \leq C t^{- \ell -\frac{Q + \sigma(i_1) +\cdots + \sigma(i_m)}{2}}
 \int_{\mathbb{H}^n}e^{-\frac{|\y^{-1}\x|^2}{ct}}|f(\y)|d\mu(\y).
 \end{split}
\end{equation}

If $|\y| \geq 2|\x|$, then by the triangle inequality \eqref{eq:triangle}, we have
$|\y^{-1}\x| \geq |\y| - |\x| \geq \frac{1}{2}|\y|$, and thus
$e^{-\frac{|\y^{-1}\x|^2}{ct}} \leq  e^{-\frac{|\y|^2}{2ct}}$. If, instead, $|\y| \leq 2|\x|$, then
$e^{-\frac{|\y^{-1}\x|^2}{ct}} \leq 1 \leq e^{4|\x|^2}e^{-|\y|^2}$. Using these estimates and \eqref{eq:wwww},
we have, for every $(t,\x) \in U$,
\begin{align*}
&\int_{\mathbb{H}^n } \left| (\partial_t^\ell X_{i_1} \cdots X_{i_m} H_t)(\y^{-1}\x) f(\y) \right| d\mu(\y) \\
 &\leq C_{\ell,m} t^{- \ell -\frac{Q + \sigma(i_1) +\cdots + \sigma(i_m)}{2}}
 \left(\int_{|\y| \geq 2|\x|}e^{-\frac{|\y|^2}{2ct}}|f(\y)|d\mu(\y) +e^{4|\x|^2}\int_{|\y|<2|\x|}e^{-|\y|^2}|f(\y)|d\mu(\y)\right)\\
 &\leq C_{f,\ell,m,U},
\end{align*}
as desired.
\end{proof}

\subsection{Proof of Theorem \ref{thm:main-1}}
We introduce an auxiliary class of Lipschitz functions as follows.
\begin{definition}
{\rm Let $\alpha >0$ and $k:= \lfloor \alpha/2 \rfloor +1$. The space $\widetilde{\Gamma}^{\alpha/2}(\mathbb{H}^n)$
is defined as as 
\begin{align*}
\widetilde{\Gamma}^{\alpha/2}(\mathbb{H}^n) := \Big\{f : (1 + |\cdot|)^{-\alpha} f\in L^\infty (\mathbb{H}^n) 
 \text{ and } \sup_{t >0} t^{k-\frac{\alpha}{2}}\big\|\partial_t^k T_t f \big\|_{L^\infty (\mathbb{H}^n)} <\infty\Big\}.
\end{align*}
We} endow this class with the norm
\begin{align*}
 \|f\|_{\widetilde{\Gamma}^{\alpha/2} (\mathbb{H}^n)} :=  \|  (1 + |\cdot|)^{-\alpha} f \|_{L^\infty(\mathbb{H}^n)} +   \sup_{t >0} t^{k-\frac{\alpha}{2}}\big\|\partial_t^k T_t f \big\|_{L^\infty (\mathbb{H}^n)} .
\end{align*}
\end{definition}

\begin{remark}
{\rm  If $(1 + |\cdot|)^{-\alpha }f \in L^\infty (\mathbb{H}^n)$, then $\partial_t^{\ell}X_{i_1}\cdots X_{i_m}T_t f$ is well defined for every $\ell \in \mathbb{N}_0$, every $m \in\mathbb{N}_0$, and every $(i_1,\dots, i_m) \in \mathbb{N}_0^m$. Indeed, from the proof of Lemma~\ref{lem:5.2} we see that  $(1 + |\cdot|)^{-\alpha }f \in L^\infty (\mathbb{H}^n)$ implies   $\int_{\mathbb{H}^n} e^{-\frac{|\x|^2}{t}}f(\x)d\x <\infty$ for every $t >0$. Hence
it follows from Lemma \ref{lem:sdsds} that the function $(t,\x)\mapsto \partial_t^{\ell}X_{i_1}\cdots X_{i_m}T_t f(x)$ is smooth on $(0,\infty) \times \mathbb{H}^n$.}
\end{remark}

\begin{lemma} \label{lem:XjXl-0}
Let $0<\alpha <2$ and let $f$ be a measurable function on $\mathbb{H}^n$ 
such that $(1 + |\cdot|)^{-\alpha} f \in L^\infty (\mathbb{H}^n)$.
Then, for every $j, \ell \in \{1,\dots, 2n+1\}$ and every $\x \in \mathbb{H}^n$, 
$X_jX_\ell \partial_t T_t f$ is well defined, and 
\begin{align*}
\lim_{t \rightarrow \infty} \partial_tX_\ell X_j  T_t f(\x) =0.
\end{align*}
\end{lemma}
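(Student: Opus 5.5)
Well-definedness comes from the Remark preceding the lemma. Since $(1+|\cdot|)^{-\alpha}f\in L^\infty(\mathbb{H}^n)$, the proof of Lemma~\ref{lem:5.2} gives $\int_{\mathbb{H}^n}e^{-|\x|^2/t}|f(\x)|\,d\mu(\x)<\infty$ for every $t>0$. Lemma~\ref{lem:sdsds} then says $(t,\x)\mapsto T_tf(\x)$ is $C^\infty$ on $(0,\infty)\times\mathbb{H}^n$. Because the integrals below converge absolutely and locally uniformly, we may differentiate under the integral sign and write
\begin{align*}
\partial_t X_\ell X_j T_t f(\x)=\int_{\mathbb{H}^n}(\partial_t X_\ell X_j H_t)(\y^{-1}\x)\,f(\y)\,d\mu(\y).
\end{align*}
Since $T_t$ is convolution with $H_t$ and the $X_i$ are left-invariant, the derivatives fall on the kernel, and mixed derivatives commute by smoothness.

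For the limit, fix $\x$ and apply Lemma~\ref{lem:heat kernel of sub-Laplacian} with one time derivative and $m=2$. This gives
\begin{align*}
|(\partial_t X_\ell X_j H_t)(\y^{-1}\x)|\le C\,t^{-1-\frac{Q+\sigma(\ell)+\sigma(j)}{2}}e^{-\frac{|\y^{-1}\x|^2}{ct}}\le C\,t^{-2-\frac{Q}{2}}e^{-\frac{|\y^{-1}\x|^2}{ct}}
\end{align*}
for $t\ge1$, because $\sigma(\ell)+\sigma(j)\ge2$. Bound $|f(\y)|\le C(1+|\y|)^{\alpha}\le C(1+|\x|)^\alpha(1+|\y^{-1}\x|)^\alpha$ using the triangle inequality, and use $(1+|\y^{-1}\x|)^\alpha\le C(1+\sqrt t)^\alpha(1+|\y^{-1}\x|/\sqrt t)^\alpha$. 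The Gaussian absorbs the polynomial factor. Changing variables $\y^{-1}\x=\delta_{\sqrt t}(\mathbf z)$ gives $\int t^{-Q/2}e^{-|\y^{-1}\x|^2/ct}(1+|\y^{-1}\x|/\sqrt t)^\alpha d\mu(\y)\le C$. Hence for $t\ge1$,
\begin{align*}
|\partial_t X_\ell X_j T_tf(\x)|\le C(1+|\x|)^\alpha\,t^{-2}\,t^{\alpha/2}\,\|(1+|\cdot|)^{-\alpha}f\|_{L^\infty},
\end{align*}
which tends to $0$ as $t\to\infty$ because $\alpha<2<4$.

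This argument has no real obstacle; in fact even the cruder exponent $t^{-1-1+\alpha/2}$ (using only $\sigma\ge1$ for each derivative) would suffice since $\alpha<2$. The only point needing care is the pointwise justification of differentiation under the integral. That follows from the local uniform domination already established in the proof of Lemma~\ref{lem:sdsds}.
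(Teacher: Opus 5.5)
Your proof is correct and follows the paper's argument essentially line by line: the same kernel bound from Lemma~\ref{lem:heat kernel of sub-Laplacian}, the same splitting $1+|\y|\le(1+|\y^{-1}\x|/\sqrt t)(1+\sqrt t)(1+|\x|)$, and the same Gaussian integral, giving decay of order $t^{-1-\frac{\sigma(j)+\sigma(\ell)}{2}+\frac{\alpha}{2}}$ as $t\to\infty$. Your closing remark about a ``cruder exponent'' $t^{-1-1+\alpha/2}$ is harmless but redundant, since that is exactly the exponent you already derived.
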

\begin{proof}
Note that 
\begin{align*}
\partial_t X_\ell X_j  T_t f(\x) = \int_{\mathbb{H}^n} ( X_\ell X_j \partial_t H_t) (y^{-1}x) f(\y) d\mu(\y)
\end{align*}
By Lemma \ref{lem:heat kernel of sub-Laplacian},
\begin{align*}
\big|  (\partial_t X_\ell X_j   H_t) (y^{-1}x)\big| \leq C t^{-1 -\frac{Q + \sigma(j) + \sigma(\ell)}{2}}  
e^{-\frac{|\y^{-1}\x|^2}{ct}}.
\end{align*}
Hence
\begin{align*}
\big|\partial_t X_\ell X_j  T_t f(\x)\big| &\leq C \|(1 +|\cdot|)^{-\alpha}f \|_{L^\infty(\mathbb{H}^n)}
t^{-1 -\frac{\sigma(j) +\sigma(\ell)}{2}} \int_{\mathbb{H}^n} t^{-\frac{Q}{2}}  e^{-\frac{|\y^{-1}\x|^2}{ct}} (1 + |\y|)^\alpha d\mu(\y).
\end{align*}
Using the triangle inequality \eqref{eq:triangle}, we have
\begin{align*}
&1 +|\y|  =  1 + |\y^{-1}| = 1 +  |y^{-1}\x \x^{-1}|  \leq 1 + |\y^{-1}x| + |\x^{-1}|
= 1 +  |\y^{-1}x| + |\x|\\
&\quad\quad\quad\quad  \leq (1 + |\y^{-1}\x|) (1 + \x) \leq 
\Big( 1 +   \frac{|\y^{-1}\x|}{\sqrt{t}} \Big) (1 + \sqrt{t})(1 +\x),
\end{align*}
it follows that
\begin{align*}
\big| \partial_tX_\ell X_j   T_t f(\x)\big| &\leq  C  \|(1 +|\cdot|)^{-\alpha}f \|_{L^\infty(\mathbb{H}^n)}
(1 + |\x|)^\alpha  t^{-1 -\frac{\sigma(j) +\sigma(\ell)}{2}}(1 +\sqrt{t})^\alpha \\
&\quad\quad\quad\quad \times \int_{\mathbb{H}^n} t^{-\frac{Q}{2}}  e^{-\frac{|\y^{-1}\x|^2}{ct}} \Big(1 +\frac{|\y^{-1}\x|}{\sqrt{t}}\Big)^\alpha d\mu(\y)\\
&\leq C  \|(1 +|\cdot|)^{-\alpha}f \|_{L^\infty(\mathbb{H}^n)}
(1 + |\x|)^\alpha  t^{-1 -\frac{\sigma(j) +\sigma(\ell)}{2}}(1 +\sqrt{t})^\alpha.
\end{align*}
Since $-1 -\frac{\sigma(j) +\sigma(\ell)}{2} + \frac{\alpha}{2} <0$, the above estimate implies 
$\lim_{t \rightarrow \infty}\partial_t X_\ell X_j  T_t f(\x) =0$.
\end{proof}

\begin{lemma} \label{lem:XjXl}
Let $0< \alpha <2$ let $f$ be a measurable function on $\mathbb{H}^n$ 
such that $(1 + |\cdot|)^{-\alpha} f \in L^\infty (\mathbb{H}^n)$.
There exists a constant $C$ such that for all $j, \ell \in \{1,\dots, 2n+1\}$,
\begin{align*}
\left\|X_\ell X_j T_t f\right\|_{L^\infty} \leq C  t^{\frac{\alpha-\sigma(j) -\sigma(\ell)}{2}} \sup_{s>0} s^{1-\frac{\alpha}{2}} \big\|\partial_s T_s f \big\|_{L^\infty (\mathbb{H}^n)},
\end{align*}
where $\sigma(i)$, $i =1,\dots, 2n+1$, denotes the homogeneous degree of $X_i$, cf. \eqref{eq:homogeneous degree of X}. 
\end{lemma}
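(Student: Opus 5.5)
We write $X_\ell X_j T_t f$ as an integral of its $t$-derivative from $t$ to $\infty$. The key steps, in order, are as follows.

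Set $A:=\sup_{s>0} s^{1-\frac{\alpha}{2}}\|\partial_s T_s f\|_{L^\infty}$; we may assume $A<\infty$. First we show $\lim_{s\to\infty} X_\ell X_j T_s f(\x)=0$ for every $\x$. This follows exactly as in the proof of Lemma \ref{lem:XjXl-0}, using Lemma \ref{lem:heat kernel of sub-Laplacian} with $\ell=0$. The kernel bound $t^{-\frac{Q+\sigma(j)+\sigma(\ell)}{2}}e^{-|\y^{-1}\x|^2/(ct)}$ combined with $(1+|\y|)\le (1+|\y^{-1}\x|/\sqrt t)(1+\sqrt t)(1+|\x|)$ gives
\[
|X_\ell X_jT_sf(\x)|\lesssim (1+|\x|)^\alpha s^{-\frac{\sigma(j)+\sigma(\ell)}{2}}(1+\sqrt s)^\alpha ,
\]
and the exponent is $\frac{\alpha-\sigma(j)-\sigma(\ell)}{2}<0$ since $\alpha<2\le \sigma(j)+\sigma(\ell)$. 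Then, by smoothness (Lemma \ref{lem:sdsds}), we obtain
\[
X_\ell X_jT_tf(\x)=-\int_t^\infty \partial_s X_\ell X_j T_sf(\x)\,ds .
\]
Absolute integrability near $s=\infty$ is guaranteed by the next step, and pointwise convergence by Lemma \ref{lem:XjXl-0}.

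The main step is to bound $\|\partial_s X_\ell X_jT_sf\|_{L^\infty}$ in terms of $A$. We use the semigroup property $T_s=T_{s/2}T_{s/2}$ and the commutation of $\partial_s$ with the semigroup, so that $\partial_sT_sf=T_{s/2}(\partial_uT_uf|_{u=s/2})$ up to harmless constants. Since the $X_i$ are left-invariant and $T_{s/2}$ is right convolution with $H_{s/2}$, we get
\[
X_\ell X_j\partial_sT_sf = (\partial_uT_uf)|_{u=s/2}\ast X_\ell X_jH_{s/2}.
\]
Here $X_\ell X_j$ falls on the kernel because $X(g\ast H)=g\ast XH$ for left-invariant $X$. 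By Lemma \ref{lem:heat kernel of sub-Laplacian}, $\|X_\ell X_jH_{s/2}\|_{L^1}\lesssim s^{-\frac{\sigma(j)+\sigma(\ell)}{2}}$. Hence
\[
\|\partial_sX_\ell X_jT_sf\|_{L^\infty}\lesssim s^{-\frac{\sigma(j)+\sigma(\ell)}{2}}(s/2)^{\frac\alpha2-1}A .
\]

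Integrating from $t$ to $\infty$ converges because $\frac{\alpha}{2}-1-\frac{\sigma(j)+\sigma(\ell)}{2}<-1$, and it yields $C t^{\frac{\alpha-\sigma(j)-\sigma(\ell)}{2}}A$, which is the claim.

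The main obstacle is justifying the interchanges. Specifically, we must check that $\partial_s T_s f=T_{s/2}\big(\partial_u T_u f|_{u=s/2}\big)$ holds for $f$ with merely polynomial growth, and that $X_\ell X_j$ may be moved onto the kernel. We would handle this by differentiating under the integral sign with the Gaussian bounds. The growth $(1+|\cdot|)^\alpha$ is integrable against Gaussians, as in Lemma \ref{lem:sdsds}, which justifies dominated convergence. We would verify the semigroup identity first for $T_sf=T_{s/2}(T_{s/2}f)$ via Fubini, and then differentiate in $s$.
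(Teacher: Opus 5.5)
Your proposal is correct and follows essentially the paper's argument. Both use the splitting $\partial_s T_s f = T_{s/2}\big(\partial_u T_u f|_{u=s/2}\big)$ with $X_\ell X_j$ moved onto $H_{s/2}$, which gives the $L^\infty$ bound $C A\, s^{\frac{\alpha-\sigma(j)-\sigma(\ell)}{2}-1}$, and then integrate from $t$ to $\infty$. Your first step, proving $X_\ell X_j T_s f(\x)\to 0$ directly, is exactly the limit the Newton--Leibniz step needs; the paper instead cites Lemma \ref{lem:XjXl-0}, which concerns $\partial_s X_\ell X_j T_s f$, so your later appeal to that lemma for ``pointwise convergence'' is unnecessary.
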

\begin{proof}
We denote $A:= \sup_{s>0} s^{1-\frac{\alpha}{2}} \big\|\partial_s T_s f \big\|_{L^\infty (\mathbb{H}^n)}$.
Similarly to \eqref{eq:semigroup property}, we have
\[
\partial_t   X_\ell X_j T_t f = 2 X_\ell X_j H_{t/2} (\partial_t H_{t/2}f).
\]
Hence, for every $\x \in \mathbb{H}^n$,
\begin{align*}
\left|   \partial_t   X_\ell X_j T_t f(\x) \right| &=
 2\left| \int_{\mathbb{H}^n}  (X_\ell X_j  {H}_{t/2} )(\y^{-1} \x) (\partial_t  {H}_{t/2}f ) (\y)  d\mu(\y)\right|\\
& \leq 2 A
 \int_{\mathbb{H}^n} t^{\frac{\alpha}{2} -1}\left| (X_\ell X_j  {H}_{t/2} )(\y^{-1} \x) \right| d\mu(\y) \\
 &\leq 2A t^{\frac{\alpha}{2} -1}t^{-\frac{ \sigma (j) +\sigma(\ell)}{2}}\int_{\mathbb{H}^n} t^{-\frac{Q}{2}}{}e^{-\frac{|\y^{-1}x|}{ct}}   d\mu(\y)\\
  &\leq C A t^{\frac{ \alpha - \sigma (j) -\sigma(\ell)}{2} -1} .
\end{align*}
Since $\lim_{u \rightarrow \infty}   \partial_u   X_\ell X_j T_t f (\x) =0$ (cf. Lemma \ref{lem:XjXl-0}), we have 
\begin{align*}
 \left|     X_\ell X_j T_t f(\x) \right|& =\left| \lim_{u \rightarrow \infty} \partial_u   X_\ell X_j T_u f)(\x) -\int_t^\infty \partial_u   X_\ell X_j T_u f (\x) du \right|    \\
 &\leq \int_t^\infty \left|\partial_u   X_\ell X_j T_u f (\x) \right| du \\
 &\leq CA \int_t^\infty u^{\frac{ \alpha - \sigma (j) -\sigma(\ell)}{2} -1} du \\
 &\leq CA t^{\frac{ \alpha - \sigma (j) -\sigma(\ell)}{2} },
\end{align*}
where we used that $0< \alpha <2$ for the last inequality.  
\end{proof}

\begin{lemma} \label{characterizatoin of Lip heat}
Let $0< \alpha <2$. Then 
$f \in \widetilde{\Gamma}^{\alpha/2} (\mathbb{H}^n)$ if and only if
{\small \begin{align} \label{two finite}
   \sup_{|\y| >0}
      \frac{\|f(\x \y) + f(\x \y^{-1}) -2 f(\x)\|_{L^\infty(\mathbb{H}^n_{\x})}}{|\y|^\alpha} <\infty \ \text{ and }  \
 \|(1 + |\cdot|)^{-\alpha}f \|_{L^\infty(\mathbb{H}^n)} <\infty.
  \end{align}
Moreover}, 
\begin{align}  \label{eq:equivalent classical}
 \|f\|_{\widetilde{\Gamma}^{\alpha/2} (\mathbb{H}^n)} \asymp  \sup_{|\y|>0}
  \frac{\|f(\x \y) + f(\x\y^{-1}) -2 f(\x)\|_{L^\infty(\mathbb{H}^n_\x)}}{|\y|^\alpha} +  \|(1+|\cdot|)^{-\alpha}f \|_{L^\infty(\mathbb{H}^n)} .
\end{align}
\end{lemma}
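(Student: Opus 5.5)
We prove the two directions separately. Since $0<\alpha<2$ we have $k=1$, so the semigroup seminorm is $A:=\sup_{t>0}t^{1-\alpha/2}\|\partial_tT_tf\|_{L^\infty}$. Throughout, write $D_\y f(\x):=f(\x\y)+f(\x\y^{-1})-2f(\x)$.

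\emph{Direction $\Leftarrow$.} Assume \eqref{two finite} and let $M$ be the second-difference seminorm. The first task is to express $\partial_tT_tf$ through second differences. The heat kernel $H_t$ is symmetric, $H_t(\y^{-1})=H_t(\y)$, because $\Delta_{\mathbb{H}^n}$ is invariant under $\x\mapsto\x^{-1}$. Hence
\[
T_tf(\x)=\int f(\x\y^{-1})H_t(\y)\,d\mu(\y)=\int f(\x\y)H_t(\y)\,d\mu(\y).
\]
Also $\int\partial_tH_t\,d\mu=0$. Together these give
\[
\partial_tT_tf(\x)=\tfrac12\int_{\mathbb{H}^n}D_\y f(\x)\,\partial_tH_t(\y)\,d\mu(\y).
\]
The integral converges because $f$ grows at most like $(1+|\cdot|)^\alpha$. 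Lemma \ref{lem:heat kernel of sub-Laplacian} then yields
\[
|\partial_tT_tf(\x)|\lesssim M\int|\y|^\alpha t^{-1-Q/2}e^{-|\y|^2/ct}\,d\mu(\y)\lesssim Mt^{\alpha/2-1}.
\]
Therefore $A\lesssim M$, and the growth part of the norm is assumed.

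\emph{Direction $\Rightarrow$.} Assume $f\in\widetilde{\Gamma}^{\alpha/2}(\mathbb{H}^n)$. The growth condition is part of the definition, so it remains to bound $D_\y f$. Fix $\y$, set $t=|\y|^2$, and split
\[
D_\y f=D_\y(f-T_tf)+D_\y T_tf.
\]

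For the first piece, use the heat size condition: $T_uf\to f$ almost everywhere as $u\to0$ (Lemma \ref{lem:sdsds}). Since $\partial_uT_uf$ is integrable near $0$ with $\|\partial_uT_uf\|_\infty\le Au^{\alpha/2-1}$, we get
\[
\|f-T_tf\|_\infty\le\int_0^t\|\partial_uT_uf\|_\infty\,du\lesssim At^{\alpha/2}=A|\y|^\alpha .
\]
The three terms of $D_\y(f-T_tf)$ are each bounded by this, up to null sets. This is handled by taking ess sup in $\x$, since right translation preserves Haar measure.

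The second piece $D_\y T_tf(\x)$ needs a second-order Taylor expansion along the curve $s\mapsto\x(s\y)$, $s\in[-1,1]$. Because $s\y=\exp(sY)$ is a one-parameter subgroup, the function $g(s):=T_tf(\x\exp(sY))$ satisfies $g''(s)=(Y^2T_tf)(\x\exp(sY))$. Moreover $\x\y^{-1}=\x\exp(-Y)$, since $\y^{-1}=-\y$. Hence
\[
D_\y T_tf(\x)=g(1)+g(-1)-2g(0)=\int_{-1}^1(1-|s|)g''(s)\,ds .
\]
Expand $Y^2=\sum_{j,\ell}\yy_j\yy_\ell X_jX_\ell$. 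Since $|\yy_j|\lesssim|\y|^{\sigma(j)}$ (from the Korányi norm), Lemma \ref{lem:XjXl} gives
\[
|D_\y T_tf(\x)|\lesssim A\sum_{j,\ell}|\y|^{\sigma(j)+\sigma(\ell)}\,t^{(\alpha-\sigma(j)-\sigma(\ell))/2}=CA|\y|^\alpha .
\]
This is exactly where the anisotropic degrees of $X_j$ and the non-isotropic scaling of $\yy_{2n+1}$ cancel.

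The main obstacle is this Taylor step on the non-commutative group. One must check that $\x\y^{-1}$ lies on the same exponential curve, which holds because $\y^{-1}=-\y=\exp(-Y)$. One must also check that $g''=Y^2T_tf$ along the curve, which holds because $Y$ is left-invariant and $\exp(sY)$ is its integral curve; this is Lemma \ref{lem:FTC} applied twice. Smoothness of $T_tf$ comes from Lemma \ref{lem:sdsds}. A secondary point is justifying the integral representation and the $u\to0$ limit for merely measurable $f$, which is managed by the polynomial growth bound. Finally, the norm equivalence \eqref{eq:equivalent classical} follows by collecting the constants from both directions.
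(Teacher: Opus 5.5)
Your proposal is correct and follows the paper's proof. In one direction it uses the same symmetric-kernel, mean-zero representation of $\partial_tT_tf$. In the other it uses the same splitting at $t=|\y|^2$, the bound $\|f-T_tf\|_{L^\infty}\lesssim A\,t^{\alpha/2}$, and a second-order expansion along $s\mapsto \x(s\y)$ controlled by Lemma~\ref{lem:XjXl}. The only difference is cosmetic: you write $g(1)+g(-1)-2g(0)=\int_{-1}^1(1-|s|)g''(s)\,ds$ with $g''=Y^2T_tf$, while the paper applies Lemma~\ref{lem:FTC} twice to get $\int_0^1 s\int_{-1}^1(\cdots)(\x(rs\y))\,dr\,ds$, which is the same expression after substituting $u=rs$.
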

\begin{proof}
We first show the ``$\gtrsim$'' direction of \eqref{eq:equivalent classical}.
Let $f \in \widetilde{\Gamma}^{\alpha/2} (\mathbb{H}^n)$. 
We write, for every $t >0$ and $\x,\y \in \mathbb{H}^n$,
\begin{equation} \label{eq:00000}
\begin{split}
&|f(\x \y) + f(\x\y^{-1}) -2 f(\x) | \\
&\quad\quad \leq \big|T_t f (\x\y) -f (\x\y)\big| +  
\big|T_t f (\x\y^{-1}) -f (\x\y^{-1})\big| +2 \big|T_t f(\x) -f(\x)\big| \\
&\quad\quad\quad  +  \Big|\big[(T_tf)(\x\y) -(T_tf)(\x)\big] 
+ \big[(T_tf)(\x \y^{-1}) -(T_tf)(\x)\big] \Big|
\end{split}
\end{equation}
By Lemma \ref{lem:limitation} we have, for every $y \in \mathbb{H}^n$ and for almost every $\x \in \mathbb{H}^n$,
\begin{align*}
    \lim_{t \rightarrow 0}T_t f(\x \y) =f(\x\y).
\end{align*}
It follows that
\begin{align*}
\big|T_t f(\x\y) -f(\x\y)\big|& = \left|\lim_{u \rightarrow 0} T_t f(\x\y) + \int_0^t  \partial_u T_u f(\x\y)du -f(\x\y) \right|   \\
& \leq  \int_0^t  \big| \partial_u T_u f(\x\y) \big| du \\
&\leq \|f\|_{\widetilde{\Gamma}^{\alpha/2}(\mathbb{H}^n)} \int_0^t  u^{-1 +\frac{\alpha}{2}} du \\
&=C\|f\|_{\widetilde{\Gamma}^{\alpha/2}(\mathbb{H}^n)}  t^{\frac{\alpha}{2}}.
\end{align*}
Hence, for every $\y \in \mathbb{H}^n$,
\begin{align} \label{eq:11111}
\|T_t f(\x\y) -f(\x\y)\|_{L^\infty (\mathbb{H}_{\x}^{n})} \leq C\|f\|_{\widetilde{\Gamma}^{\alpha/2}(\mathbb{H}^n)}  t^{\frac{\alpha}{2}}.
\end{align}
Similarly, we also have
\begin{align} \label{eq:22222}
  \|T_t f (\x\y^{-1}) -f (\x\y^{-1})\|_{L^\infty (\mathbb{H}_{\x}^{n})} +  \|T_t f (\x) -f (\x)\|_{L^\infty (\mathbb{H}_{\x}^{n})} \leq C\|f\|_{\widetilde{\Gamma}^{\alpha/2}(\mathbb{H}^n)} t^{\frac{\alpha}{2}}.
\end{align}


Recall that for any $\y =(\yy_1, \dots, \yy_{2n+1}) \in \mathbb{H}^{n}$, we have
$\y^{-1} = -\y = (-\yy_1,\dots, -\yy_{2n+1})$. 
Using this fact and Lemma~\ref{lem:FTC},
we have, for every $t >0$ and  every $\x,\y \in \mathbb{H}^n$,
\begin{align} \label{eq:difference}
&   \big[(T_tf)(\x\y) -(T_tf)(\x)\big] 
+ \big[(T_tf)(\x \y^{-1}) -(T_tf)(\x)\big]   \nonumber \\[1.5mm]
&\quad=   \int_0^1 \sum_{j =1}^{2n +1} \yy_j(X_j T_tf)\big(\x (s\y)\big) ds +
\int_0^1 \sum_{j =1}^{2n+1} (-\yy_j)(X_j T_t f)\big(\x(-s\y)\big) ds   \nonumber \\
&\quad =  \sum_{j=1}^{2n +1} \yy_j \int_0^1 \big[ ( X_j T_t f)\big(\x (s\y)\big) - 
  (X_j T_tf)\big(\x (-s\y)\big) \big] ds  \nonumber  \\
&\quad = \sum_{j=1}^{2n +1} \yy_j \int_0^1 \big[ ( X_j T_t f)\big(\x (s\y)\big) - 
  (X_j T_tf)\big(\x (-s\y)\big) \big] dt .
\end{align}
Using Lemma \ref{lem:FTC} again, we can write the last integrand as
\begin{align*}
& ( X_j T_t f)\big(\x (s\y)\big) - 
  (X_j T_tf)\big(\x (-s\y)\big) \\[2.5mm]
  &\quad =   \big[  ( X_j T_t f)\big(\x (s\y)\big) -  ( X_j T_t f) (\x )\big]
 -\big[ (X_j T_tf)\big(\x (-s\y)\big) -  ( X_j T_t f) (\x )  \big]\\
 &\quad = \int_0^1 \sum_{\ell=1}^{2n+1} s\yy_{\ell}( X_\ell X_j T_t f)\big(\x(rs \y)\big)  dr - 
\int_0^1 \sum_{\ell =1}^{2n +1} (-s \yy_\ell) (X_\ell X_j T_t f)\big(\x (-rs\y)\big) dr\\
 &\quad = \int_0^1 \sum_{\ell=1}^{2n+1} s\yy_{\ell}( X_\ell X_j T_t f)\big(\x(rs \y)\big)  dr +
\int_{-1}^0 \sum_{\ell =1}^{2n +1} s \yy_\ell (X_\ell X_j T_t f)\big(\x (rs\y)\big) dr\\
 &\quad = \int_{-1}^1 \sum_{\ell=1}^{2n+1} s\yy_{\ell}( X_\ell X_j T_t f)\big(\x(rs \y)\big)  dr.
\end{align*}
Inserting this into \eqref{eq:difference} gives
\begin{align*}
&  \big[(T_tf)(\x\y) -(T_tf)(\x)\big] 
+ \big[(T_tf)(\x \y^{-1}) -(T_tf)(\x)\big]  \\[1.5mm] 
& \quad = \sum_{j=1}^{2n+1}\sum_{\ell =1}^{2n+1} \yy_j \yy_\ell \int_0^1 s\left( \int_{-1}^1 ( X_\ell X_j  T_tf )\big(\x (rs \y)\big) dr\right)ds .
\end{align*}
By Lemma \ref{lem:XjXl}, we have, for all $s \in [0,1]$, all $r \in [-1,1]$ and all $\x, \y \in \mathbb{H}^n$, 
\begin{align*}
 \left|  ( X_kX_j  T_tf )\big(\x (rs \y)\big) \right|  &\leq C  t^{\frac{\alpha-\sigma(j) -\sigma(\ell)}{2}} \sup_{u>0} u^{1-\frac{\alpha}{2}} \big\|\partial_u T_u f \big\|_{L^\infty (\mathbb{H}^n)} \\
 &\leq C t^{\frac{\alpha-\sigma(j) -\sigma(\ell)}{2}}  \|f\|_{\widetilde{\Gamma}_L^{\alpha/2}(\mathbb{H}^n)}.
\end{align*}
Hence
\begin{align*}
&  \Big| \big[(T_tf)(\x\y) -(T_tf)(\x)\big] 
+ \big[(T_tf)(\x \y^{-1}) -(T_tf)(\x)\big]  \Big| \\
&\quad\quad\quad \leq C \sum_{j=1}^{2n+1}\sum_{\ell =1}^{2n+1} |\yy_j||\yy_\ell|t^{\frac{\alpha-\sigma(j) -\sigma(\ell)}{2}}  \|f\|_{\widetilde{\Gamma}_L^{\alpha/2}(\mathbb{H}^n)}\\
&\quad\quad\quad \leq C \sum_{j=1}^{2n+1}\sum_{\ell =1}^{2n+1} |\yy_j||\yy_\ell|t^{\frac{\alpha-\sigma(j) -\sigma(\ell)}{2}}  \|f\|_{\widetilde{\Gamma}_L^{\alpha/2}(\mathbb{H}^n)}.
\end{align*}
 Since $|\yy_i| \lesssim |\y|^{\sigma(i)}$ for every $i \in \{1,\dots, 2n+1\}$, it follows that
  for every $\y \in \mathbb{H}^n$,
\begin{equation} \label{eq:33333}
\begin{split}
& \Big\| \big[(T_tf)(\x\y) -(T_tf)(\x)\big] 
+ \big[(T_tf)(\x \y^{-1}) -(T_tf)(\x)\big]  \Big\|_{L^\infty(\mathbb{H}^n_\x)}  \\
&\quad\quad\quad \leq C\sum_{j=1}^{2n+1}\sum_{\ell =1}^{2n+1}|\y|^{\sigma(j) + \sigma(\ell)}t^{\frac{\alpha-\sigma(j) -\sigma(\ell)}{2}} \|f\|_{\widetilde{\Gamma}^{\alpha /2}(\mathbb{H}^n)}    
\end{split}
\end{equation}
Now we let $t =|\y|^2$. Then combining \eqref{eq:00000}, \eqref{eq:11111}, \eqref{eq:22222} and 
\eqref{eq:33333}, we obtain 
 \begin{align*}
\|f(\x \y) + f(\x \y^{-1}) -2 f(\x)\|_{L^\infty(\mathbb{H}^n_{\x})}
 \leq C\|f\|_{\widetilde{\Gamma}^{\alpha/2} (\mathbb{H}^n)} |\y|^\alpha,
 \end{align*}
which implies the ``$\gtrsim$'' direction of 
 \eqref{eq:equivalent classical}.

 For showing the ``$\lesssim$'' direction of 
 \eqref{eq:equivalent classical}, we assume that \eqref{two finite} holds. Since ${H}_t (\x) = {H}_t (\x^{-1})$, 
 we have
 \begin{align*}
  \int_{ \mathbb{H}^n} (\partial_t {H}_t) (\y) f(\x\y)d\y  =   
  \int_{ \mathbb{H}^n} (\partial_t {H}_t) (\y^{-1}) f(\x\y^{-1})d\y = \int_{ \mathbb{H}^n} (\partial_t {H}_t) (\y) f(\x\y^{-1})d\y.
 \end{align*}
 Meanwhile, since $\int_{\mathbb{H}^n}  {H}_t(\y)d\y =1$, 
 \[
 \int_{\mathbb{H}^n} \partial_t  {H}_t (\y)d\y = \partial_t\int_{\mathbb{H}^n}    {H}_t (\y)d\y =0.
 \]
 It follows that, for almost every $\x \in \mathbb{H}^n$,
 \begin{align*}
 |\partial_t  {H}_t f(\x)| & =  \frac{1}{2} \left|\int_{\mathbb{H}^n}  {H}_t (\y) \big[ f(\x \y) + f(\x \y^{-1})-2f(\x)\big]d\mu(\y)\right|   \\
 & \leq C  \sup_{|\mathbf{w}| >0}
  \frac{\|f(\mathbf{z} \mathbf{w}) + f(\mathbf{z}\mathbf{w}^{-1}) -2 f(\mathbf{z})\|_{L^\infty(\mathbb{H}^n_{\mathbf{z}})}}{|\mathbf{w}|^\alpha} \int_{\mathbb{H}^n} |(\partial_t{H}_t) (\mathbf{y})||\mathbf{y}|^\alpha d\mu(\mathbf{y}).
 \end{align*}
By Lemma \ref{lem:heat kernel of sub-Laplacian}, 
\begin{align*}
\int_{\mathbb{H}^n} |(\partial_t {H}_t) (\mathbf{y})||\mathbf{y}|^\alpha d\mu(\mathbf{z}) & \leq C  \int_{\mathbb{H}^n}
t^{-1-\frac{Q}{2}}e^{-\frac{|\mathbf{y}|^2}{ct}}|\mathbf{y}|^\alpha d\mu(\mathbf{y})\\
& = Ct^{-1+ \frac{\alpha}{2}} \int_{\mathbb{H}^n}
t^{-\frac{Q}{2}}e^{-\frac{|\mathbf{y}|^2}{ct}}\left(\frac{|\mathbf{y}|^2}{ct}\right)^{\frac{\alpha}{2}} d\mu(\mathbf{y})\\
& \leq C t^{-1+ \frac{\alpha}{2}} \int_{\mathbb{H}^n}
t^{-\frac{Q}{2}}e^{-\frac{|\mathbf{y}|^2}{2ct}} d\mu(\mathbf{y})\\
&\leq C t^{-1+ \frac{\alpha}{2}}.
\end{align*}
Hence for almost every $\x \in \mathbb{H}^n$,
 \begin{align*}
 |\partial_t  {H}_t f(\x)|  \leq  C  t^{-1 +\frac{\alpha}{2}}  \sup_{|\mathbf{w}| >0}
  \frac{\|f(\mathbf{z} \mathbf{w}) + f(\mathbf{z}\mathbf{w}^{-1}) -2 f(\mathbf{z})\|_{L^\infty(\mathbb{H}^n_{\mathbf{z}})}}{|\mathbf{w}|^\alpha}.
 \end{align*}
Consequently,
\begin{align*}
\|f\|_{\widetilde{\Gamma}^{\alpha/2}(\mathbb{H}^n)} &= \sup_{t >0} t^{1 -\frac{\alpha}{2}} \|\partial_t T_tf\|_{L^\infty (\mathbb{H}^n)} +
 \|(1 +|\cdot|)^{-\alpha}f\|_{L^\infty (\mathbb{H}^n)} \\
&\leq C \left( \sup_{|\mathbf{w}| >0}
  \frac{\|f(\mathbf{z} \mathbf{w}) + f(\mathbf{z}\mathbf{w}^{-1}) -2 f(\mathbf{z})\|_{L^\infty(\mathbb{H}^n_{\mathbf{z}})}}{|\mathbf{w}|^\alpha} + \|(1 +|\cdot|)^{-\alpha}f\|_{L^\infty (\mathbb{H}^n)}\right) ,
\end{align*}
as desired.
\end{proof}

\begin{lemma} \label{lem:rapid V} {\rm (see \cite[Lemma 6]{LinLiu})}
Suppose $\omega$ is a nonnegative rapidly decaying function. Then there exists
a constant $C >0$ such that 
\begin{align*}
  \int_{\mathbb{H}^n} V(\y)\omega_t (\y^{-1}\x) d\mu(\y) \leq C \frac{1}{t} \left(\frac{\sqrt{t}}{\rho(\x)} \right)^{2-\frac{Q}{q}},
  \quad \text{whenever } \sqrt{t} \leq \rho(\x). 
\end{align*}
\end{lemma}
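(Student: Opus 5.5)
This is the Heisenberg analogue of Shen's classical estimate (Lemma 6 of \cite{LinLiu}), and I would prove it the same way: compare $V$ with its ball averages and use the reverse H\"older condition \eqref{eq:RHC} to control how those averages grow with the radius.

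\emph{Step 1 (basic growth bound).} I first establish that for $0<r\le R$ and any $\x$,
\begin{align*}
\frac{r^2}{|B(\x,r)|}\int_{B(\x,r)}V\,d\mu \le C\Big(\frac{r}{R}\Big)^{2-\frac{Q}{q}}\frac{R^2}{|B(\x,R)|}\int_{B(\x,R)}V\,d\mu .
\end{align*}
This follows from H\"older's inequality on $B(\x,r)\subset B(\x,R)$, namely $\int_{B(\x,r)}V\le |B(\x,r)|^{1-1/q}\big(\int_{B(\x,R)}V^q\big)^{1/q}$, followed by \eqref{eq:RHC} on $B(\x,R)$ and $|B(\x,r)|=c\,r^Q$.

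\emph{Step 2 (apply it at the critical radius).} Take $R=\rho(\x)$. The quantity $r^2|B(\x,r)|^{-1}\int_{B(\x,r)}V$ is continuous in $r$ and finite because $\rho(\x)<\infty$ by Lemma \ref{lem:rho finite}. Together with the definition of $\rho$, this gives the value exactly $1$ at $r=\rho(\x)$. Hence
\begin{align*}
\int_{B(\x,r)}V\,d\mu\le C r^{Q-2}\Big(\frac{r}{\rho(\x)}\Big)^{2-\frac{Q}{q}}\qquad\text{for } r\le\rho(\x).
\end{align*}
For $r\ge\rho(\x)$ I need a polynomial-growth bound. The doubling property of $V\,d\mu$ (a standard consequence of $V\in RH_q$) gives $\int_{B(\x,2^j\rho(\x))}V\le C_0^{\,j}\rho(\x)^{Q-2}$. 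Therefore
\begin{align*}
\int_{B(\x,r)}V\,d\mu\le C\rho(\x)^{Q-2}\Big(\frac{r}{\rho(\x)}\Big)^{\kappa}\qquad\text{for } r\ge\rho(\x),
\end{align*}
for some fixed exponent $\kappa$.

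\emph{Step 3 (dyadic decomposition).} Set $s=\sqrt t\le\rho(\x)$, and decompose $\mathbb{H}^n$ into the ball $B(\x,s)$ and the annuli $B(\x,2^{j}s)\setminus B(\x,2^{j-1}s)$, $j\ge1$. Since $|\y^{-1}\x|=|\x^{-1}\y|$, on the $j$-th annulus the rapid decay of $\omega$ gives $\omega_t(\y^{-1}\x)\le C_N t^{-Q/2}2^{-jN}$. So
\begin{align*}
\int V(\y)\,\omega_t(\y^{-1}\x)\,d\mu(\y)\le C_N s^{-Q}\sum_{j\ge0}2^{-jN}\int_{B(\x,2^js)}V\,d\mu .
\end{align*}
For $2^js\le\rho(\x)$, Step 2 bounds the $j$-th term by
\begin{align*}
s^{-Q}2^{-jN}(2^js)^{Q-2}\Big(\frac{2^js}{\rho(\x)}\Big)^{2-\frac{Q}{q}}= s^{-2}\Big(\frac{s}{\rho(\x)}\Big)^{2-\frac{Q}{q}}2^{-j(N-Q+Q/q)} .
\end{align*}
This is summable once $N$ is large.

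For $2^js>\rho(\x)$, Step 2 bounds the term by $s^{-Q}2^{-jN}\rho(\x)^{Q-2}(2^js/\rho(\x))^{\kappa}$. Write $\rho(\x)^{Q-2}=s^{Q-2}(\rho(\x)/s)^{Q-2}$, and use $\rho(\x)/s<2^j$ (valid for these $j$) to absorb all powers of $\rho(\x)/s$. What remains is a sum of the form $s^{-2}2^{-j(N-C')}$ restricted to $2^j>\rho(\x)/s$. That restriction gives a factor $(s/\rho(\x))^{M}$ for any $M$, in particular $M\ge 2-Q/q$. Together the two ranges give $C\,t^{-1}(\sqrt t/\rho(\x))^{2-Q/q}$.

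\emph{Main obstacle.} The delicate step is Step 2 for radii larger than $\rho(\x)$: it needs the doubling of $V\,d\mu$ under $RH_q$ on $\mathbb{H}^n$. I expect this to follow as in the Euclidean case, since $\mathbb{H}^n$ with Haar measure is a space of homogeneous type. Everything else is bookkeeping of exponents, with $N$ chosen large relative to $\kappa$ and $Q$.
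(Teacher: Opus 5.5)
Your proof is correct, and it is the standard argument for this estimate; the paper does not prove the lemma itself but quotes it from \cite[Lemma 6]{LinLiu}. The argument has three steps: the H\"older/reverse H\"older comparison giving $r^{2-Q}\int_{B(\x,r)}V\,d\mu\lesssim (r/\rho(\x))^{2-Q/q}$ for $r\le\rho(\x)$, polynomial growth for $r\ge\rho(\x)$ from the doubling of $V\,d\mu$, and a dyadic annular decomposition against the rapid decay of $\omega$. The doubling property you flag as the main obstacle is indeed standard for $V\in RH_q(\mathbb{H}^n)$, and it is the same fact on which Lemma \ref{lem:property of rho} from \cite{Lu} rests, so there is no gap.
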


 Using Lemma \ref{lem:rapid V} and arguing similarly as \cite[Theorem 3.11]{DT}, one can derive 
 the following result. We omit the details here. 

\begin{proposition} \label{comparison of heat semigroups}
Let $0< \alpha \leq 2- Q /q$, and let $f$ be a function such that 
$\rho(\cdot)^{-\alpha} f \in L^\infty(\mathbb{H}^n)$. Then there exists a constant $C$ such that for all
$t>0$, \begin{align*}
\|\partial_t T_t^L f - \partial_t T_t f\|_{L^\infty (\mathbb{H}^n)}
\leq C t^{-1+\frac{\alpha}{2}}  \|\rho(\cdot)^{-\alpha} f \|_{L^\infty(\mathbb{H}^n)}.
\end{align*}
Consequently,  
\begin{align*}
\sup_{t >0 } t^{1-\frac{\alpha}{2}} \|\partial_t T_t^L f \|_{L^\infty (\mathbb{H}^n)}
\leq  \sup_{t >0 } t^{1-\frac{\alpha}{2}} \|\partial_t T_t f \|_{L^\infty (\mathbb{H}^n)} + C  \|\rho(\cdot)^{-\alpha} f \|_{L^\infty(\mathbb{H}^n)}
\end{align*}    
and
\begin{align*}
\sup_{t >0 } t^{1-\frac{\alpha}{2}} \|\partial_t T_t f \|_{L^\infty (\mathbb{H}^n)}
\leq  \sup_{t >0 } t^{1-\frac{\alpha}{2}} \|\partial_t T^L_t f \|_{L^\infty (\mathbb{H}^n)} + C  \|\rho(\cdot)^{-\alpha} f \|_{L^\infty(\mathbb{H}^n)}.
\end{align*}  
\end{proposition}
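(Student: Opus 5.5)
Write $\partial_t T_t^L f(\x)-\partial_t T_t f(\x)=\int_{\mathbb{H}^n}\partial_t\big(K_t^L(\x,\y)-H_t(\y^{-1}\x)\big)f(\y)\,d\mu(\y)$ and split according to whether $\sqrt t\ge\rho(\x)$ or $\sqrt t<\rho(\x)$. The two ``consequently'' inequalities then follow at once from the triangle inequality after multiplying by $t^{1-\alpha/2}$, so all the work is in the first estimate.

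\emph{Large time, $\sqrt t\ge\rho(\x)$.} Here we estimate the two terms separately. For the $L$-term, Lemma~\ref{lem:kernel estimatiaon} with large $N$ gives $|\partial_tK_t^L(\x,\y)|\lesssim t^{-1-Q/2}e^{-|\y^{-1}\x|^2/ct}(\sqrt t/\rho(\x))^{-N}$. For $|f(\y)|\le\|\rho^{-\alpha}f\|_\infty\rho(\y)^\alpha$, Lemma~\ref{lem:property of rho} gives
\[
\rho(\y)^\alpha\lesssim\rho(\x)^\alpha\big(1+|\y^{-1}\x|/\rho(\x)\big)^{\alpha k_0/(k_0+1)}\le\rho(\x)^\alpha\big(1+|\y^{-1}\x|/\sqrt t\big)^{\alpha}(\sqrt t/\rho(\x))^{\alpha}.
\]
The Gaussian absorbs the polynomial factor, so this term is bounded by $t^{-1}\rho(\x)^\alpha(\sqrt t/\rho(\x))^{\alpha-N}\lesssim t^{-1+\alpha/2}$. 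The $\Delta$-term has no decay factor, and is the main obstacle. The plan is to use the cancellation $\int\partial_tH_t=0$, as in Lemma~\ref{characterizatoin of Lip heat}:
\[
\partial_tT_tf(\x)=\int\partial_tH_t(\y^{-1}\x)\big(f(\y)-f(\x)\big)\,d\mu(\y).
\]
But $f$ is only known to be bounded by $\rho^\alpha$, with no smoothness, so this does not immediately help. The alternative is the crude bound $|f(\y)|\lesssim\rho(\x)^\alpha(1+|\y^{-1}\x|/\sqrt t)^\alpha(\sqrt t/\rho(\x))^\alpha$, which gives $t^{-1}\rho(\x)^\alpha(\sqrt t/\rho(\x))^\alpha=t^{-1+\alpha/2}$. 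This works precisely because $\sqrt t\ge\rho(\x)$ allows replacing $\rho(\x)$ by $\sqrt t$, and it uses $\alpha k_0/(k_0+1)\le\alpha$. So the large-time regime needs no cancellation.

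\emph{Small time, $\sqrt t<\rho(\x)$.} Here use the perturbation (Duhamel) formula
\[
K_t^L(\x,\y)-H_t(\y^{-1}\x)=-\int_0^t\!\int K^L_{t-s}(\x,\z)V(\z)H_s(\y^{-1}\z)\,d\mu(\z)\,ds,
\]
differentiated in $t$. Better, following \cite[Theorem 3.11]{DT}, write $\partial_tT^L_t-\partial_tT_t$ by splitting the $s$-integral at $t/2$, so that the time derivative falls on a factor whose time variable is at least $t/2$. This gives a kernel bound
\[
\big|\partial_t(K_t^L-H_t)(\x,\y)\big|\lesssim t^{-1}\Big(\frac{\sqrt t}{\rho(\x)}\Big)^{2-Q/q}\omega_t(\y^{-1}\x)
\]
with $\omega$ rapidly decaying. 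The factor $(\sqrt t/\rho(\x))^{2-Q/q}$ comes from Lemma~\ref{lem:rapid V}, which is applied to $\int V(\z)\,\omega_s(\z^{-1}\x)\,d\mu(\z)$ after composing the Gaussian bounds of Lemmas~\ref{lem:heat kernel of sub-Laplacian} and~\ref{lem:kernel estimatiaon} (Gaussian convolved with Gaussian is again Gaussian-type). Integrating against $|f(\y)|\lesssim\rho(\x)^\alpha(1+|\y^{-1}\x|/\rho(\x))^{\alpha}$, whose polynomial factor is absorbed by $\omega_t$ since $\sqrt t<\rho(\x)$, yields
\[
t^{-1}\rho(\x)^\alpha(\sqrt t/\rho(\x))^{2-Q/q}=t^{-1+\alpha/2}(\sqrt t/\rho(\x))^{2-Q/q-\alpha}\le t^{-1+\alpha/2},
\]
using exactly $\alpha\le2-Q/q$ and $\sqrt t<\rho(\x)$.

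The delicate point is the derivative of the Duhamel kernel near $s=0$ and $s=t$. I would handle it by writing $\partial_t$ of the perturbation via $L$ acting on one factor (semigroup property), placing it on whichever of $K^L_{t-s}$ or $H_s$ has time at least $t/2$, and using the $\partial_t$-kernel bounds there; the remaining integral over the short time interval is controlled by Lemma~\ref{lem:rapid V}, with $\int_0^{t}s^{-1}(\sqrt s/\rho)^{2-Q/q}ds$ convergent because $2-Q/q>0$.
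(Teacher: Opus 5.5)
Your proposal is correct and follows essentially the route the paper indicates (arguing as in \cite[Theorem 3.11]{DT} with Lemma~\ref{lem:rapid V}). You split at $\sqrt t=\rho(\x)$; for large $t$ you use crude Gaussian bounds together with $\rho(\y)\lesssim \sqrt t+|\y^{-1}\x|$; for small $t$ you use a Duhamel-based estimate for $\partial_t(K_t^L-H_t)$ carrying the factor $(\sqrt t/\rho(\x))^{2-Q/q}$, and $\alpha\le 2-Q/q$ closes the argument.

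The one point you leave implicit is where the $V$-integral is localized. On the half of the Duhamel integral where $H_s$ carries the small time, it is localized near $\y$ rather than $\x$. So Lemma~\ref{lem:rapid V}, or its large-scale counterpart when $\sqrt s>\rho(\y)$, is applied at $\y$. You then return to $\rho(\x)$ via Lemma~\ref{lem:property of rho}, and the resulting polynomial factor in $|\y^{-1}\x|/\sqrt t$ is absorbed by the Gaussian since $\sqrt t<\rho(\x)$.
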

 
 \medskip
 
We are now in a position to complete the proof of  Theorem \ref{thm:main-1}. 
\begin{proof}[Completing the proof of Theorem \ref{thm:main-1}]
First we  show that $\Lambda^\alpha_L (\mathbb{H}^n) \subset \Gamma_L^{\alpha/2} (\mathbb{H}^n)$ and the inclusion map 
is continuous.
For any $f \in \Lambda_L^\alpha (\mathbb{H}^n)$, we have 
\begin{align} \label{eq:sec diff}
   \sup_{|\y| >0}
      \frac{\|f(\x \y) + f(\x \y^{-1}) -2 f(\x)\|_{L^\infty(\mathbb{H}^n_{\x})}}{|\y|^\alpha} <\infty 
  \end{align}
  and
  \begin{align} \label{eq:control rho}
  \|\rho(\cdot)^{-\alpha} f \|_{L^\infty(\mathbb{H}^n)} <\infty.
  \end{align}
  Since $\rho(\x) \leq C(1 +|\x|)$ (see Remark \ref{rmk:growth of rho}), the property \eqref{eq:control rho} 
  implies that
  \begin{align} \label{eq:control 1+x}
    \|(1 +|\cdot|)^{-\alpha}f \|_{L^\infty (\mathbb{H}^n)} <\infty.
  \end{align}
In view of Lemma \ref{characterizatoin of Lip heat},  conditions \eqref{eq:sec diff} and \eqref{eq:control 1+x}
imply that $f\in \widetilde{\Gamma}^{\alpha/2}(\mathbb{H}^n)$. Moreover,  
\begin{align*}
&\sup_{t >0}t^{1-\frac{\alpha}{2}}  \|\partial_t T_t f  \|_{L^\infty (\mathbb{H}^n)} \\
&\quad\quad \leq
\|f\|_{\widetilde{\Gamma}^{\alpha/2}(\mathbb{H}^n)} \\
&\quad\quad =  \sup_{|\y| >0}
      \frac{\|f(\x \y) + f(\x \y^{-1}) -2 f(\x)\|_{L^\infty(\mathbb{H}^n_{\x})}}{|\y|^\alpha} +  \|(1+|\cdot|)^{-\alpha}f \|_{L^\infty(\mathbb{H}^n)} \\
  &\quad\quad  \leq   \sup_{|\y| >0}
      \frac{\|f(\x \y) + f(\x \y^{-1}) -2 f(\x)\|_{L^\infty(\mathbb{H}^n_{\x})}}{|\y|^\alpha} + C \|\rho(\cdot)^{-\alpha}f 
  \|_{L^\infty(\mathbb{H}^n)} \\
  &\quad\quad  \leq C \|f\|_{\Lambda^\alpha_L(\mathbb{H}^n)}.
\end{align*}
Finally, applying Proposition \ref{comparison of heat semigroups} and Proposition \ref{growth control of heat Lipschitz},
we see that 
\begin{align*}
\|f\|_{\Gamma_L^{\alpha/2} (\mathbb{H}^n)} &= \sup_{t >0 } t^{1-\frac{\alpha}{2}} \|T_t^L f\|_{L^\infty (\mathbb{H}^n)} \\
&\leq  \sup_{t >0 } t^{1-\frac{\alpha}{2}} \|\partial_t T_t f\|_{L^\infty (\mathbb{H}^n)} + C  \|\rho(\cdot)^{-\alpha} f  \|_{L^\infty(\mathbb{H}^n)}\\
&\leq\|  f \|_{\Lambda_L^\alpha (\mathbb{H}^n)} +C\|  f \|_{\Lambda_L^\alpha (\mathbb{H}^n)}\\
& \leq C\|  f \|_{\Lambda_L^\alpha (\mathbb{H}^n)}.
\end{align*}
Hence   $\Lambda^\alpha_L (\mathbb{H}^n) \subset \Gamma_L^\alpha (\mathbb{H}^n)$ and the inclusion map 
is continuous.

\medskip
Next we show that $\Gamma_L^{\alpha/2} (\mathbb{H}^n) \subset \Lambda_L^\alpha (\mathbb{H}^n)$ and the inclusion
map is continuous. Let $f \in \Gamma_L^{\alpha/2} (\mathbb{H}^n)$. By Lemma \ref{growth control of heat Lipschitz}, 
we have $\rho(\cdot)^{-\alpha} f \in L^\infty (\mathbb{H}^n)$ and
\begin{align} \label{eq:rho alpha finite}
 \|\rho(\cdot)^{-\alpha}f\|_{L^\infty (\mathbb{H}^n)} \leq C \|f\|_{\Gamma_L^{\alpha/2} (\mathbb{H}^n)}.   
\end{align}
This estimate also implies
\begin{align} \label{eq:1+x finite}
 \|(1 +|\cdot|)^{-\alpha}f\|_{L^\infty (\mathbb{H}^n)} \leq C \|f\|_{\Gamma_L^{\alpha/2} (\mathbb{H}^n)}.   
\end{align}
since $\rho(\x)\leq C(1 +|\cdot|)$ (see Remark \ref{rmk:growth of rho}).
With the condition \eqref{eq:rho alpha finite}, we may use Proposition \ref{comparison of heat semigroups} to deduce that
\begin{align*}
 \sup_{t >0}t^{1-\frac{\alpha}{2}}\|T_t f\|_{L^\infty (\mathbb{H}^n)}
 &\leq  \sup_{t >0}t^{1-\frac{\alpha}{2}}\|T_t^L f\|_{L^\infty (\mathbb{H}^n)} + C \|\rho(\cdot)^{-\alpha}f\|_{L^\infty (\mathbb{H}^n)}\\
 &\leq \|f\|_{\Gamma_L^{\alpha/2} (\mathbb{H}^n)} +C\|f\|_{\Gamma_L^{\alpha/2} (\mathbb{H}^n)} \\
 & \leq C\|f\|_{\Gamma_L^{\alpha/2} (\mathbb{H}^n)}.
\end{align*}
The latter estimate together with \eqref{eq:1+x finite}
implies that $f \in \widetilde{\Gamma}^{\alpha/2} (\mathbb{H}^n)$ and 
\begin{align*}
 \|f\|_{\widetilde{\Gamma}^{\alpha/2} (\mathbb{H}^n)}    \leq C\|f\|_{\Gamma^{\alpha/2}_L (\mathbb{H}^n)}.
\end{align*}
Then, by Lemma  \ref{characterizatoin of Lip heat}, we have 
\begin{align*}
  \sup_{|\y| >0}
      \frac{\|f(\x \y) + f(\x \y^{-1}) -2 f(\x)\|_{L^\infty(\mathbb{H}^n_{\x})}}{|\y|^\alpha}\leq C \|f\|_{\widetilde{\Gamma}^{\alpha/2} (\mathbb{H}^n)}    \leq C\|f\|_{\Gamma^{\alpha/2}_L (\mathbb{H}^n)}.    
\end{align*}
This estimate coupled with \eqref{eq:rho alpha finite} implies that $f \in \Lambda_L^\alpha (\mathbb{H}^n)$ and
\begin{align*}
 \|f\|_{\Lambda^\alpha_L (\mathbb{H}^n)}   \leq C\|f\|_{\Gamma^{\alpha/2}_L (\mathbb{H}^n)}.
\end{align*}
Hence $\Gamma_L^{\alpha/2} (\mathbb{H}^n) \subset \Lambda_L^\alpha (\mathbb{H}^n)$ and the inclusion
map is continuous.
\end{proof}

\section{Regularity of fractional powers of $L=-\Delta_{\mathbb{H}^n} +V$} \label{sec:regularity}
Following \cite{ST0}, we introduce the Bessel potentials, fractional integrals and fractional ``sub-Laplacian''
associated to the operator $L =-\Delta_{\mathbb{H}^n}+V$
on $\mathbb{H}^n$ as follows.

\begin{itemize}
    \item  {\it The Bessel potential of order $\beta>0$},
    \begin{align*}
        (Id + L)^{-\beta /2} f(\x)= \frac{1}{\Gamma(\beta/2)} \int_0^\infty e^{-t}e^{-tL}f(\x) t^{\beta /2} \frac{dt}{t}.
    \end{align*}
    \item {\it The fractional integral of order $\beta >0$},
    \begin{align*}
      L^{-\beta /2} f(\x)= \frac{1}{\Gamma(\beta/2)} \int_0^\infty  e^{-tL}f(\x) t^{\beta /2} \frac{dt}{t}.
    \end{align*}
    \item {\it The fractional ``sub-Laplacian'' of order $\beta/2 >0$},
        \begin{align*}
      L^{\beta /2} f(\x)= \frac{1}{c_\beta} \int_0^\infty (Id-  e^{-tL})^{\lfloor \beta/2 \rfloor +1}f(\x)
       \frac{dt}{t^{1 + (\beta/2)}}.
    \end{align*}
\end{itemize}

Arguing similarly as in \cite[Theorems 1.6 and 1.7]{DT}, one can prove the following  results. We omit their proofs.
\begin{theorem}
Let $\alpha,\beta >0$ and $\mathcal{T}_\beta$ denote $(Id+ L)^{-\beta/2}$ or $L^{-\beta /2}$. Then
we have
\begin{enumerate}[\rm (i)]
    \item $\|\mathcal{T}_\beta f\|_{\Gamma_L^{(\alpha +\beta)/2}(\mathbb{H}^n)} \leq C \|f\|_{\Gamma_L^{\alpha/2}(\mathbb{H}^n)}$. 
    \item $\|\mathcal{T}_\beta f\|_{\Gamma_L^{\beta/2}(\mathbb{H}^n)} \leq C \|f\|_{L^\infty (\mathbb{H}^n)}$. 
\end{enumerate}
\end{theorem}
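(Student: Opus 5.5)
We follow the scheme of \cite[Theorems 1.6 and 1.7]{DT}: compute the heat derivatives of $\mathcal{T}_\beta f$ by commuting $\partial_t^{k'} T_t^L$ with the subordination integral, then estimate the resulting $s$-integral by splitting at $s=t$. Fix $\alpha>0$ and set $k:=\lfloor \alpha/2\rfloor+1$ and $k':=\lfloor(\alpha+\beta)/2\rfloor+1$, so that $k'\ge k$. For $f\in\Gamma_L^{\alpha/2}(\mathbb{H}^n)$, Proposition~\ref{growth control of heat Lipschitz} gives $\rho^{-\alpha}f\in L^\infty$, so $f$ has at most polynomial growth. Using the semigroup property and Fubini (justified by Lemma~\ref{lem:kernel estimatiaon} and Lemma~\ref{lem:mixed weighted estimates}), we write
\begin{align*}
\partial_t^{k'}T_t^L(L^{-\beta/2}f)(\x)=\frac{1}{\Gamma(\beta/2)}\int_0^\infty \partial_u^{k'}T_u^Lf(\x)\big|_{u=t+s}\, s^{\beta/2}\frac{ds}{s}.
\end{align*}
For the Bessel potential the same identity holds with the extra factor $e^{-s}$, which is harmless since $e^{-s}\le 1$. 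Before this, we check that $\mathcal{T}_\beta f$ is well defined and lies in $\mathcal{F}_L(\mathbb{H}^n)$. Convergence of the integral at $s\to\infty$ for $L^{-\beta/2}$ follows from the decay $|T_s^Lf(\x)|\lesssim s^{\alpha/2}(\rho(\x)/\sqrt{s})^{m}$ for large $m$, which comes from Lemma~\ref{lem:mixed weighted estimates} with $\ell=0$ and $m/2\ge k$.

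For part (i), since $k'\ge k$, the defining seminorm together with Lemma~\ref{lem:mixed weighted estimates} (case $\ell\ge k$, $m=0$) gives
\begin{align*}
|\partial_u^{k'}T_u^Lf|\lesssim u^{-k'+\alpha/2}\|f\|_{\Gamma_L^{\alpha/2}(\mathbb{H}^n)}.
\end{align*}
We then split the integral. On $s<t$, the integrand is bounded by $t^{-k'+\alpha/2}s^{\beta/2-1}$, and integrating gives $t^{-k'+(\alpha+\beta)/2}$. On $s>t$, the integrand is bounded by $s^{-k'+(\alpha+\beta)/2-1}$, which is integrable at infinity because $k'>(\alpha+\beta)/2$, and again gives $t^{-k'+(\alpha+\beta)/2}$. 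Multiplying by $t^{k'-(\alpha+\beta)/2}$ yields (i). The heat size condition for $\mathcal{T}_\beta f$ comes from the same bounds: derivatives of order at least $k'$ vanish as $t\to\infty$ by this estimate. Lower orders are handled by integrating from infinity, as in the second half of the proof of Lemma~\ref{lem:mixed weighted estimates}, using the $\rho$-weighted decay.

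For part (ii) the argument is the same with $\alpha=0$, so now $k'=\lfloor\beta/2\rfloor+1$. We need $|\partial_u^{k'}T_u^Lf|\lesssim u^{-k'}\|f\|_{L^\infty}$, which follows directly from Lemma~\ref{lem:kernel estimatiaon} since $\int t^{-Q/2}e^{-|\y^{-1}\x|^2/ct}\,d\mu(\y)\lesssim1$. The split at $s=t$ then goes through verbatim.

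The main obstacle is $L^{-\beta/2}$ on unbounded $f$. Convergence of $\int_1^\infty T_s^Lf\,s^{\beta/2-1}\,ds$, and the vanishing of low-order heat derivatives of $L^{-\beta/2}f$ as $t\to\infty$, both rely on the extra factor $(\rho(\x)/\sqrt{s})^{N}$ from Lemma~\ref{lem:kernel estimatiaon}. This factor, which uses $\rho<\infty$ (Lemma~\ref{lem:rho finite}), is what compensates for $s^{\alpha/2+\beta/2}$ growth. I would first prove the pointwise bound $|\partial_s^{j}T_s^Lf(\x)|\lesssim s^{-j+\alpha/2}(\rho(\x)/\sqrt{s})^{N}$ for all $j\ge0$, as an instance of Lemma~\ref{lem:mixed weighted estimates} with large $m$. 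Then I would justify Fubini and the limits with it, so that the identity above becomes legitimate.
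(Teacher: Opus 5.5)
Your proposal is correct and follows the paper's route. The paper omits the proof and defers to \cite[Theorems 1.6 and 1.7]{DT}, which uses this subordination-and-split argument: write $\partial_t^{k'}T_t^L\mathcal{T}_\beta f$ via the semigroup representation, then bound $\int_0^\infty (t+s)^{-k'+\alpha/2}s^{\beta/2-1}\,ds\lesssim t^{-k'+(\alpha+\beta)/2}$, which is exactly what your split at $s=t$ produces.

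The one step you leave implicit is the small-$s$ end of the subordination integral, which you need for Fubini and for the Gaussian-integrability half of the heat size condition. There Lemma~\ref{lem:mixed weighted estimates} does not help, since its bound blows up as $s\to0$. Instead, Proposition~\ref{growth control of heat Lipschitz} together with Lemmas~\ref{lem:property of rho} and \ref{lem:kernel estimatiaon} gives $|T_s^Lf(\x)|\lesssim\rho(\x)^{\alpha}\|f\|_{\Gamma_L^{\alpha/2}(\mathbb{H}^n)}$ for $s\le\rho(\x)^2$. Combined with your large-$s$ decay, this yields $|\mathcal{T}_\beta f(\x)|\lesssim\rho(\x)^{\alpha+\beta}\|f\|_{\Gamma_L^{\alpha/2}(\mathbb{H}^n)}$, and $\lesssim\rho(\x)^{\beta}\|f\|_{L^\infty(\mathbb{H}^n)}$ in (ii). Lemma~\ref{lem:5.2} then gives $\mathcal{T}_\beta f\in\mathcal{F}_L(\mathbb{H}^n)$ directly.
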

\begin{theorem}
 Let $0<\beta <\alpha$ and $f \in \Gamma^{\alpha /2}_L (\mathbb{H}^n)$. Then
 \begin{align*}
 \|\mathcal{T}_\beta f\|_{\Gamma_L^{(\alpha -\beta)/2}(\mathbb{H}^n)} \leq C \|f\|_{\Gamma_L^{\alpha/2}(\mathbb{H}^n)}.    
 \end{align*}
\end{theorem}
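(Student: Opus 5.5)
The plan is to follow the scheme of \cite[Theorem 1.7]{DT} and work entirely with the heat semigroup. Put $k:=\lfloor \alpha/2\rfloor+1$ and $m:=\lfloor (\alpha-\beta)/2\rfloor+1$; note $m\le k$. We must show $\sup_{t>0} t^{m-\frac{\alpha-\beta}{2}}\|\partial_t^m T_t^L \mathcal{T}_\beta f\|_{L^\infty}\lesssim \|f\|_{\Gamma_L^{\alpha/2}}$. Here $\mathcal{T}_\beta$ is the fractional operator in the statement. For the negative-power operators $L^{-\beta/2}$ and $(Id+L)^{-\beta/2}$, whose integrands are $e^{-sL}f$, this is immediate from the first theorem, so the substantive case is the fractional sub-Laplacian $L^{\beta/2}$, which must lose $\beta$ derivatives. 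First I will check that $\mathcal{T}_\beta f$ is well defined and belongs to $\mathcal{F}_L(\mathbb{H}^n)$. By Proposition \ref{growth control of heat Lipschitz}, $\rho^{-\alpha}f\in L^\infty$, and Lemma \ref{lem:mixed weighted estimates} (with $m=0$, $\ell\ge k$) gives decay of $\partial_t^\ell T^L_t f$. Taken together these should give absolute convergence of the defining integral near $t=\infty$ and $t=0$, together with the heat size conditions, following the argument of Lemma \ref{lem:5.2}.

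For the main estimate, I will commute $\partial_t^m T_t^L$ with the integral defining $L^{\beta/2}$. Write $(Id-e^{-sL})^{N}$ with $N=\lfloor\beta/2\rfloor+1$, and express $(Id-e^{-sL})^N T_t^L f$ as an $N$-fold iterated integral of $\partial^N T^L$. Concretely,
\[
(Id-e^{-sL})^N g = (-1)^N\int_0^s\!\!\cdots\!\int_0^s \partial_u^N T_u^L g\big|_{u=r_1+\cdots+r_N}\,dr_1\cdots dr_N .
\]
I then split the $s$-integral at $s=t$.

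For $s<t$, use the iterated-integral representation. The quantity to control is $\partial^{m+N}T^L_{t+r}f$ with $r\le Ns$, and since $m+N\ge k$ the $\Gamma_L^{\alpha/2}$ bound gives size $\lesssim t^{-(m+N)+\alpha/2}s^N$. Integrating against $s^{-1-\beta/2}\,ds$ over $(0,t)$ converges because $N>\beta/2$, and yields $t^{-m+(\alpha-\beta)/2}$.

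For $s>t$, expand the binomial $(Id-e^{-sL})^N=\sum_j\binom{N}{j}(-1)^je^{-jsL}$. Each term is bounded by $\|\partial_t^mT^L_{t+js}f\|_\infty\lesssim t^{-m+\alpha/2}$ whenever $m\ge k$. When $m<k$ the bound on $\partial^m T^L f$ is not directly available; in that case I will keep the difference structure and use that $\partial^k$ controls, integrating in the time variable from $t$ to $t+js$. This gives a size $t^{-m}\min\{t^{\alpha/2},\dots\}$, and care is needed here.

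The hard step is the regime $m<k$, that is, when $\alpha$ and $\alpha-\beta$ lie in different integer bands. There I would first try to use the moment structure $\sum_j\binom{N}{j}(-1)^j=0$ to write the $s>t$ piece as $\int_t^{t+js}\partial_u^{m+1}T_u^Lf\,du$ and iterate until $k$ derivatives appear. This produces $\lesssim t^{-m}\int_t^\infty s^{-1-\beta/2}\min(s,\cdot)\cdots$, which is integrable precisely because $\beta<\alpha$. Finally, the $\Gamma$-norm decay at infinity required in $\mathcal{F}_L$ follows from the same bounds letting $t\to\infty$.
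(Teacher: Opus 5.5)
Your scheme is the natural one: split the $s$-integral at $s=t$, use the $N$-fold integral of $\partial_u^N T_u^L$ for $s<t$, and use the binomial expansion for $s>t$. The paper itself gives no proof and only points to \cite{DT}. Your $s<t$ estimate (via $m+N\ge k$) and your $s>t$ estimate when $m=k$ are fine. The gap is the case $m<k$ of the piece $s>t$, which occurs as soon as $\beta\ge 2$; it is exactly the step you leave as ``care is needed''. There $\partial_u^mT_u^Lf$ is not uniformly bounded, so cancellation is indispensable, and the bound you announce, $t^{-m}\int_t^\infty s^{-1-\beta/2}\min(s,\cdot)\cdots$ ``integrable because $\beta<\alpha$'', is not what comes out.

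Set $\phi(u):=\partial_u^mT_u^Lf$ and $\Delta_s\phi(u):=\phi(u+s)-\phi(u)$; the integrand is then $(-1)^N\Delta_s^N\phi(t)$. The identity $\sum_j\binom Nj(-1)^j=0$ alone buys only one derivative. To reach $\partial^k$ you must factor $\Delta_s^N=\Delta_s^{N-p}\Delta_s^p$ with $p:=k-m$, which needs $p\le N$. This is true, since $\lfloor\alpha/2\rfloor-\lfloor(\alpha-\beta)/2\rfloor\le\lfloor\beta/2\rfloor+1$, but your sketch does not check it. Then $\Delta_s^p\phi(u)=\int_{[0,s]^p}\partial_v^kT_v^Lf\big|_{v=u+r_1+\cdots+r_p}\,dr$. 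Hence $|\Delta_s^N\phi(t)|\lesssim\int_{[0,s]^p}(r_1+\cdots+r_p)^{\alpha/2-k}\,dr\simeq s^{\alpha/2-m}$, replaced by $\log(1+s/t)$ when $p=1$ and $\alpha/2\in\mathbb N$. Finally $\int_t^\infty s^{\alpha/2-m-1-\beta/2}\,ds\simeq t^{-m+(\alpha-\beta)/2}$ is finite because $m>(\alpha-\beta)/2$, not because $\beta<\alpha$.

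A cleaner fix is to estimate $\partial_t^kT_t^LL^{\beta/2}f$ instead. The $s>t$ piece is then trivially $\lesssim t^{-k+\alpha/2}\int_t^\infty s^{-1-\beta/2}ds$. You then descend from $k$ to $m$ derivatives by integrating from $t=\infty$, as in the second half of the proof of Lemma~\ref{lem:mixed weighted estimates}. This is legitimate because $j>(\alpha-\beta)/2$ for $m\le j<k$ and because $L^{\beta/2}f\in\mathcal F_L(\mathbb H^n)$.

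The second gap is everything you file under ``should give'': that $L^{\beta/2}f$ is defined, that $\partial_t^mT_t^L$ can be moved inside the $s$-integral, and that $L^{\beta/2}f\in\mathcal F_L(\mathbb H^n)$. The last point is required, since $\Gamma_L^{(\alpha-\beta)/2}$ is by definition a subset of $\mathcal F_L(\mathbb H^n)$. Your final sentence only yields $\partial_t^\ell T_t^LL^{\beta/2}f\to 0$ for $\ell\ge m$, not for $\ell<m$ (e.g.\ $\ell=0$), and it does not give the Gaussian integrability.

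The missing ingredient is a pointwise growth bound such as $\int_0^\infty|(Id-e^{-sL})^Nf(\x)|\,s^{-1-\beta/2}ds\lesssim\rho(\x)^{\alpha-\beta}\|f\|_{\Gamma_L^{\alpha/2}(\mathbb H^n)}$. Prove it by splitting at $s=\rho(\x)^2$.
\begin{itemize}
\item For large $s$, use $|f(\x)|\lesssim\rho(\x)^\alpha$ (Proposition~\ref{growth control of heat Lipschitz}) and $|T^L_{js}f(\x)|\lesssim\rho(\x)^{2k}(js)^{-k+\alpha/2}\le\rho(\x)^{\alpha}$ (Lemma~\ref{lem:mixed weighted estimates}).
\item For small $s$, the iterated-integral representation and Lemma~\ref{lem:mixed weighted estimates} make the integrand $O(s^{\min\{\alpha/2,N\}})$ up to powers of $\rho(\x)$. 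When $N<k$ this requires integrating in $u$ down from $u=\rho(\x)^2$. Convergence at $s=0$ is where $\beta<\alpha$ is actually used.
\end{itemize}
Lemma~\ref{lem:5.2} then gives $L^{\beta/2}f\in\mathcal F_L(\mathbb H^n)$. Remark~\ref{rmk:growth of rho}, set against the Gaussian kernel bound, justifies Fubini.

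Finally, as printed the statement uses $\mathcal T_\beta$ from the previous theorem, which is a slip for $L^{\beta/2}$ as in \cite{DT}, Theorem 1.7. You are right to treat $L^{\beta/2}$. However, your claim that the negative-power case is immediate from the first theorem is wrong. That theorem lands in $\Gamma_L^{(\alpha+\beta)/2}$, whose bound $t^{-\ell+(\alpha+\beta)/2}$ is weaker than $t^{-\ell+(\alpha-\beta)/2}$ for $t>1$. There is no embedding $\Gamma_L^{(\alpha+\beta)/2}\hookrightarrow\Gamma_L^{(\alpha-\beta)/2}$ in general, because the $\rho$-weighted decay is not uniform when $\rho$ is unbounded.
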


\end{document}